\numberwithin{equation}{section}
\numberwithin{figure}{section}
\theoremstyle{plain}
\newtheorem{thm}{\protect\theoremname}
  \theoremstyle{plain}
  \newtheorem{lem}[thm]{\protect\lemmaname}
  \theoremstyle{plain}
  \theoremstyle{definition}
  \newtheorem{defn}[thm]{\protect\definitionname}
\date{\today}
  \providecommand{\definitionname}{Definition}
  \providecommand{\lemmaname}{Lemma}
\providecommand{\theoremname}{Theorem}
  \providecommand{\corollaryname}{Corollary}
  \providecommand{\definitionname}{Definition}
  \providecommand{\lemmaname}{Lemma}
\providecommand{\theoremname}{Theorem}
\begin{document}

\title[Semi-discretization for stochastic scalar conservation laws with multiple rough fluxes]{Semi-discretization for stochastic scalar conservation laws with multiple rough fluxes}
\begin{abstract}
\noindent We develop a semi-discretization approximation  for scalar conservation laws with multiple rough time dependence in inhomogeneous fluxes.  The method is based on Brenier's transport-collapse algorithm and uses characteristics defined in the setting of rough paths. 
We prove strong $L^1$-convergence for inhomogeneous fluxes and provide a rate of convergence for homogeneous one's. The approximation scheme as well as the proofs are based on the recently developed theory of pathwise entropy solutions and uses the kinetic formulation which allows to define globally the (rough) characteristics.
\end{abstract}

\author[B. Gess]{Benjamin Gess}
\address{Max-Planck Institute for Mathematics in the Sciences \\
04103 Leipzig\\
Germany }
\email{bgess@mis.mpg.de}

\author{Benoît Perthame}

\address{Sorbonne Universit\'es, UPMC Univ. Paris 06, CNRS, INRIA,  UMR 7598, Laboratoire Jacques-Louis Lions, \'Equipe MAMBA, 4, place Jussieu 75005, Paris, France.}

\email{benoit.perthame@upmc.fr}

\author{Panagiotis E. Souganidis }

\address{Department of Mathematics \\
 University of Chicago \\
 Chicago, IL 60637 \\
 USA }

\email{souganidis@math.uchicago.edu }

\keywords{Stochastic scalar conservation laws; Kinetic formulation; Rough paths; Transport collapse}

\subjclass[2000]{60H15, 65M12, 35L65.}

\maketitle

\section{Introduction}

We introduce a semi-discretization scheme and prove its convergence for stochastic scalar conservation laws (with multiple rough fluxes) of the form
\begin{equation}
\begin{cases}
du+ \displaystyle\sum_{i=1}^{N}\partial_{x_{i}}A^{i}(x,u)\circ dz^{i}_t =0\quad\text{in }\R^{N}\times(0,T)\label{eq:intro-SCL-inhomo},\\[1mm]
u(\cdot, 0)  =u_{0}\in(L^{1}\cap L^{2})(\R^{N}).
\end{cases}
\end{equation}
The precise assumptions on $A$, $z$ are presented in the sections \ref{sec:homo} and \ref{sec:inhomo} below. To introduce the results here we assume that 
$A\in C^{2}(\R^{N}\times\R;\R^{N})$ and  $z$ is an $\a$-Hölder geometric rough path; for example, $z$ may be a $d$-dimensional (fractional) Brownian motion or $z(t)=(t,\dots,t)$ in which case we are back in the classical deterministic setting --see Appendix \ref{sec:RP} for some background on rough paths. For spatially homogeneous fluxes, the theory is simpler and  $z\in C([0,T];\R^{N})$ is enough. In what follows we may occasionally use the term ``stochastic'' even when $z$ is a continuous or a rough path.

Stochastic scalar conservation laws of the type \eqref{eq:intro-SCL-inhomo} arise in several applications. For example, \eqref{eq:intro-SCL-inhomo} appears in the theory of mean field games developed by Lasry and Lions \cite{LL06}, \cite{LL06-2}, \cite{LL07}. We refer to Gess and Souganidis \cite{GS14} and Cardaliaguet, Delarue, Lasry and Lions \cite{CDLL15} for more details on the derivation of \eqref{eq:intro-SCL-inhomo} in this case.

The semi-discretization scheme we consider here is based on first rewriting \eqref{eq:intro-SCL-inhomo} in its kinetic form using the classical Maxwellian
\begin{equation}
\chi(x,\xi,t):=\chi(u(x,t),\xi):=\left\{ \begin{array}{l}
+1\text{ for }0\leq\xi\leq u(x,t),\\[2mm]
-1\text{ for }u(x,t)\leq\xi\leq0,\\[2mm]
\;0\text{ otherwise}.
\end{array}\right.\label{eq:char_fctn}
\end{equation}
The theory  of pathwise entropy solutions introduced by Lions, Perthame and Souganidis in \cite{LPS13} and further developed by Lions, Perthame and Souganidis in \cite{LPS14} and Gess and Souganidis  in \cite{GS14} (see  Appendix \ref{sec:recall_e_soln}) for the precise definition and some results) asserts that there exists a non-negative, bounded measure $m$ on $\R^N\times\R\times[0,T]$ such that, in the sense of distributions,
\begin{equation}
\partial_{t}\chi+\sum_{i=1}^{N}a^{i}(x,\xi)\partial_{x_{i}}\chi\circ dz^{i}+\sum_{i=1}^{N}\partial_{x_{i}}A^{i}(x,\xi)\partial_{\xi}\chi\circ dz^{i}=\partial_{\xi}m,\label{eq:kinetic-1-1}
\end{equation}
where, for notational simplicity we set   
  $$a^{i}(x,\xi):=(\partial_{u}A^{i})(x,\xi).$$
The approximation  is based on a splitting and fast relaxation scheme. Given a sequence of time steps $0=t_{0}<t_{1}<\dots<t_{K}=T$, %for each time step, 
we first solve the linear ``free-streaming'' transport equation
\begin{equation}
\partial_{t}f_{\D t}+\sum_{i=1}^{N}a^{i}(x,\xi)\partial_{x_{i}}f_{\D t}\circ dz^{i}+\sum_{i=1}^{N}\partial_{x_{i}}A^{i}(x,\xi)\partial_{\xi}f_{\D t}\circ dz^{i}=0\quad\text{on }\R^N\times\R\times[t_k,t_{k+1}),\label{eq:kinetic-1-1-1}
\end{equation}
and then introduce a fast relaxation step, setting (see section~\ref{section:notation} for the notation),
\begin{equation}\label{eq:uDt}
u_{\D t}(x,t):=\int f_{\D t}(x,\eta,t-)d\eta \ \  \ \text{and} \ \  \  f_{\D t}(x,\xi,t_{k+1}):=\chi(u_{\D t}(x,t_{k+1}),\xi);
\end{equation}
note that $f_{\D t}$ is discontinuous at $t_k$ while $u_{\D t}$ is not. 

The approximation of the pathwise entropy solution to \eqref{eq:intro-SCL-inhomo} that we are considering here is $u_{\D t}$. 

We present our results first for homogeneous, that is, $x$-independent fluxes, and then we treat the general case. 

Consider the homogeneous stochastic scalar conservation law
\begin{align}
du+\sum_{i=1}^{N}\partial_{x_{i}}A^{i}(u)\circ dz^{i} & =0\quad\text{in }\R^{N}\times(0,T).\label{eq:intro-SCL-homo}
\end{align}
We prove the strong convergence of the approximants $u_{\D t}$ to the pathwise entropy solution $u$ and provide an estimate for the rate of convergence (see Theorem \ref{thm:main} below), that is, for $u_{0}\in(BV\cap L^{\infty} \cap L^1)(\R^{N})$, we show that there exists $C>0$ depending only on the data such that 
\begin{equation}
\|u(t)-u_{\D t}(t)\|_{L^{1}}\le C\sqrt{\D z},\label{eq:intro-rate}
\end{equation}
where $\D z$ defined by
\begin{equation}\label{eq:Dz}
\D z:=\max_{k=0,\dots,K-1}\sup_{t\in[t_{k},t_{k+1}]}|z_{t}-z_{t_{k}}|.
\end{equation}
In the general inhomogeneous case, that is, for \eqref{eq:intro-SCL-inhomo}, no bounded variation estimates are known either for the solution $u$ or for the approximants $u_{\D t}$. In addition, due to the spatial dependence, we cannot use averaging techniques. To circumvent these difficulties, we devise a new method of proof based on the concept of generalized kinetic solutions and new energy estimates (see Lemma \ref{lem:tightness} below). The result (see Theorem \ref{thm:x-dep}) is that, for $u_{0}\in(L^{1}\cap L^{2})(\R^{N})$ and $\D t\to 0$, 
\[
u_{\D t}\to u\quad\text{in }L^{1}(\R^{N}\times[0,T]).
\]

The semi-discretization scheme we introduce here is a generalization of the transport-collapse scheme developed by Brenier \cite{B83,B84} and Giga and Miyakawa \cite{GM83} for the deterministic homogeneous scalar conservation law
\begin{align}
\partial_{t}u+\sum_{i=1}^{N}\partial_{x_{i}}A^{i}(u) & =0\quad\text{in }\R^{N}\times(0,T).\label{eq:intro-SCL-det}
\end{align}
In this setting, the convergence of the transport-collapse scheme was proven in \cite{B83,B84,GM83} based on bounded variation arguments. A general methodology for this type of result as well as for error estimates was developed by Bouchut and Perthame \cite{BP98}. An alternative proof of the weak convergence of the transport-collapse scheme based on averaging techniques was presented by Vasseur in \cite{V99} for \eqref{eq:intro-SCL-det} with $N=1$ and $A^{i}(u)=\frac{1}{2}u^{2}$, that is, for Burgers' equation. 

The results we present here generalize what was known before even for the deterministic problem. Indeed, firstly, we establish a rate of convergence for the transport-collapse scheme (see~\eqref{eq:intro-rate}) which was previously unavailable even in the deterministic case (although maybe not too surprising in view of \cite{BP98}). Secondly, we prove the convergence of the %transport-collapse 
scheme also in the inhomogeneous case, where the averaging techniques and, thus, the method developed in \cite{V99} do not apply, in particular because our assumptions allow for degenerate fluxes.

The well-posedness of the pathwise entropy solutions for \eqref{eq:intro-SCL-inhomo} has been proven in \cite{LPS13,LPS14,GS14}. Regularity and long-time behavior has been considered by Lions, Perthame and Souganidis \cite{LPS12} and Gess and Souganidis \cite{GS14-2}. For a detailed account of numerical methods for (deterministic) conservation laws we refer to LeVeque \cite{L92}, Bouchut \cite{B04}, Godlewski and Raviart \cite{GR96}, Eymard and Gallou{\"e}t, Herbin \cite{EGH00} and the references therein.

Finally, we recall that kinetic solutions to \eqref{eq:intro-SCL-det} were constructed by Brenier and Corrias \cite{BC98}, Lions, Perthame and Tadmor \cite{LPT94} and Perthame \cite{P02} as limits of the so-called Bhatnagar, Gross, Krook (BGK) approximation, that is, 
\begin{equation}
\partial_{t}f^{\ve}+\sum_{i=1}^{N}(A^{i})'(\xi)\partial_{x_{i}}f^{\ve}=\frac{1}{\ve}(\mcM f^{\ve}-f^{\ve}),\label{eq:intro-BDK}
\end{equation}
where  the ``Maxwellian'' associated with a distribution $f$ is defined by 
\begin{equation}
\mcM f(x,\xi,t):=\chi(\int f(x,\eta,t)d\eta,\xi). \label{eq:intro-maxwellian}
\end{equation}
In comparison, the transport-collapse scheme we are considering here is based on a fast relaxation scale for the right-hand side of \eqref{eq:intro-BDK}, that is on enforcing $\mcM f^{\ve}=f^{\ve}$ at the time-steps $t_{k}$. 

\subsection*{Structure of the paper}

The strong convergence and the rate for the homogeneous case is obtained in section \ref{sec:homo}. The inhomogeneous case is treated in section \ref{sec:inhomo}. Some background for the theory of rough paths is presented in Appendix \ref{sec:RP}. The definition and fundamental properties of pathwise entropy solutions to \eqref{eq:intro-SCL-inhomo} are recalled in Appendix \ref{sec:recall_e_soln}. A basic, but crucial, bounded variation estimate for indicator functions is given in Appendix \ref{sec:indicator}.

\subsection*{Notation}
\label{section:notation}
We set $\R_+:=(0,\infty)$ and $\delta$ is the ``Dirac'' mass at the origin in $\R$. The complement and closure of a set $A\subseteq \R^N$ are  denoted respectively by $A^c$ and $\bar A$,  and $B_R$ is the open ball in $\R^N$ centered at the origin with radius $R$.  We write  $\|f\|_{C(\mcO)}$  for the sup norm of a  continuous bounded function $f$ on $\mcO\subseteq\R^{M}$ and, for $k=1,\ldots,\infty$,  we let  $C_{c}^{k}(\mcO)$  be the space of all $k$ times continuously differentiable functions with compact support in $\mcO$. For $\gamma>0$, $\text{Lip}^\gamma(\mcO;\R^{l})$
%$\text{Lip}^\gamma(\R^{N}\times\R;\R^{N})$ 
is the set of $\R^l$ valued functions defined on $\mcO$ with $k=0, \ldots\lfloor\gamma\rfloor$ bounded derivatives and $\gamma-\lfloor\delta\rfloor$ Hölder continuous $\lfloor\gamma\rfloor$-th derivative; for simplicity, if $\gamma=1$ and $l=1$, we write $\text{Lip}(\mcO)$ and denote by $\|\cdot\|_{C^{0,1}}$ the Lipschitz constant.  The subspace of $L^{1}$-functions with bounded total variation is $BV$. If $f \in BV$, then $\|f\|_{BV}$ is its total variation. 
For $u\in L^1([0,T]; L^p(\R^N))$ we write $\|u(t)\|_p$ for the $L^p$ norm of $u(\cdot,t)$. To simplify the presentation, given a function $f(x,\xi)$ we write $\|f\|_{L^1_{x,\xi}}:=\int |f|dxd\xi:=\int |f(x,\xi)|dxd\xi$. For a measure $m$ on $\R^N\times\R\times[0,T]$ we often write $m(x,\xi,t)dxd\xi dt$ instead of $dm(x,\xi,t)$. If $f\in L^{1}(\R^{N}\times[0,T])$ is such that $t\mapsto f(\cdot,t)\in L^{1}(\R^{N})$ is càdlàg, that is, right-continuous with left limits, we let 
\[
\int f(x,t-)dx:=\lim_{h\downarrow0}\int f(x,t-h)dx.
\]
The space of all càdlàg functions from an interval $[0,T]$ to a metric space $M$ is denoted by $D([0,T];M)$. For a function $f:[0,T] \to \R$ and $a,b\in [0,T]$ we set $f|_a^b := f(b)-f(a)$. %where $\R_+ := (0,\infty)$. 
The negative and positive part of a function $f:\R^N\to\R$ are defined by $f^- := \max\{-f,0\}$ and $f^+ := \max\{f,0\}$. Finally, given $a,b \in \R$, $a\wedge b:=min(a,b).$

\section{Spatially homogeneous stochastic scalar conservation laws\label{sec:homo}}

We consider stochastic homogeneous scalar conservation laws, that is, the initial value problem
\begin{equation}\label{eq:SCL}%\left\{\begin{aligned}
\begin{cases}
du+\sum_{i=1}^{N}\partial_{x_{i}}A^{i}(u)\circ dz^{i}  =0\quad\text{in }\R^{N}\times(0,T),\\[1mm]
u(\cdot, 0)  =u_{0}\in(BV\cap L^{\infty})(\R^{N}), %\nonumber 
%\end{aligned}\right.
\end{cases}
\end{equation}
where
\begin{equation}\label{eq:homo_assumptions}
   z\in C([0,T];\R^{N})\quad\text{and}\quad  A\in C^{2}(\R;\R^{N});
\end{equation}
recall that as mentioned earlier $BV$ is taken to be a subset of $L^1$. 
%With \eqref{eq:char_fctn} 

The kinetic formulation reads, informally,
\begin{equation}
\partial_{t}\chi+\sum_{i=1}^{N}a^{i}(\xi)\partial_{x_{i}}\chi\circ dz^{i}=\partial_{\xi}m,\label{eq:kinetic}
\end{equation}
for some non-negative, bounded measure $m$ on $\R^N\times\R\times [0,T]$, where $a:=A'.$
%  $$a:=A'.$$ 

Fix $\D t>0$, define $t_{k}:=k\D t$ with $k=0,\dots,K$ and $K\D t \approx T$ and $\D z$ as in \eqref{eq:Dz}. 
In what follows assume 
\begin{equation}\label{eq:Dz_ass}
  \D z\le1.
\end{equation}
The approximation $u_{\D t}$ is defined as 
\begin{equation}\label{eq:uD-def}  
u_{\D t}(x,0)=u_{0}(x) \ \ \ \text{and} \ \  \   u_{\D t}(x,t):=\int f_{\D t}(x,\xi,t-)d\xi,
\end{equation}
where $f_{\D t}$ is the solution of
\begin{equation*}
\begin{cases}
\partial_{t}f_{\D t}+\sum_{i=1}^{N}a^{i}(\xi)\partial_{x_{i}}f_{\D t}\circ dz^{i}  =0\quad\text{on }(t_{k},t_{k+1})\\[1mm]
f_{\D t}(x,\xi,t_{k})  =\chi(u_{\D t}(x,t_{k}),\xi),
\end{cases}
\end{equation*}
that is, for $t\in[t_{k},t_{k+1})$,
\begin{align}
f_{\D t}(x,\xi,t) & =f_{\D t}(x-a(\xi)(z_{t}-z_{t_{k}}),\xi,t_{k}).\label{eq:scheme}
\end{align}

The main result in this section is:
\begin{thm}
\label{thm:main}Let $u_{0}\in(BV\cap L^{\infty})(\R^{N})$ and assume \eqref{eq:homo_assumptions} and \eqref{eq:Dz_ass}. Then
\[
\sup_{t\in[0,T]}\|u(\cdot,t)-u_{\D t}(\cdot,t)\|_1\le \sqrt{ 2\|u_{0}\|_{BV}  \|a\|_{C^{0,1}([-\|u_0\|, \|u_0\|)} } \, \|u_{0}\|_{2}\sqrt{\D z}.
\]
\end{thm}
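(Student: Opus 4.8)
The plan is to compare $u_{\Delta t}$ with the exact pathwise entropy solution $u$ by measuring the $L^1$-distance between their kinetic representations $f_{\Delta t}$ and $\chi(u,\cdot)$, using the contraction/$L^1$-stability structure of the kinetic equation together with a careful accounting of the error introduced at each collapse step $t_k$. First I would recall that the exact solution satisfies the kinetic equation \eqref{eq:kinetic} with the defect measure $m\ge 0$, and observe that on each interval $(t_k,t_{k+1})$ the scheme solves the same transport equation \emph{without} defect measure, via the explicit characteristics \eqref{eq:scheme}. Since the transport operator $\partial_t + \sum_i a^i(\xi)\partial_{x_i}\circ dz^i$ is measure-preserving in $x$ for each fixed $\xi$ (the flow is a translation by $a(\xi)(z_t-z_{t_k})$), the quantity $\|f_{\Delta t}(\cdot,t) - \chi(u(\cdot,t),\cdot)\|_{L^1_{x,\xi}}$ can only change during $(t_k,t_{k+1})$ through the contribution of the exact solution's measure $m$ and through the mismatch between the true characteristics of \eqref{eq:kinetic} and the frozen ones of the scheme — but in the homogeneous case these characteristics coincide exactly in $x$ (both are translations by $a(\xi)(z_t-z_{t_k})$ on that interval, since $a$ depends only on $\xi$), so no error accrues from the transport step; all the error is concentrated at the collapse times $t_k$.

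Next I would estimate the collapse error. At each $t_k$ one replaces $f_{\Delta t}(x,\xi,t_k-)$, which is an $L^\infty$ function taking values in $[-1,1]$ with $\int f_{\Delta t}\,d\xi = u_{\Delta t}(x,t_k)$, by the Maxwellian $\chi(u_{\Delta t}(x,t_k),\xi)$. The key elementary fact — this is exactly the ``basic, but crucial, bounded variation estimate for indicator functions'' promised in Appendix \ref{sec:indicator} — is that among all functions $g(\xi)\in[-1,1]$ with prescribed integral $v=\int g\,d\xi$, the Maxwellian $\chi(v,\cdot)$ is the $L^1$-projection onto the set of such indicator profiles, and moreover that the collapse is an $L^1$-contraction in the sense that $\int|\chi(v,\xi)-\chi(w,\xi)|\,d\xi = |v-w|$ while $\int |f-\chi(\int f\,d\eta,\xi)|\,d\xi \le \int |f - \chi(w,\xi)|\,d\xi$ for any real $w$ (projection inequality). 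Hence at each $t_k$ the collapse does \emph{not} increase the distance to $\chi(u(\cdot,t_k),\cdot)$; rather, the only source of growth in $\|f_{\Delta t}-\chi(u,\cdot)\|_{L^1_{x,\xi}}$ over $[0,T]$ is the total mass of $m$ that the exact solution dissipates, together with the ``self-error'' the scheme makes because $f_{\Delta t}(\cdot,t_{k+1}-)$ is not itself a Maxwellian. To quantify the latter I would track $D_k := \int \big(|f_{\Delta t}(x,\xi,t_{k+1}-)| - |f_{\Delta t}(x,\xi,t_{k+1}-)|^2\big)\,dx\,d\xi$, i.e. the $L^1$-defect of $f_{\Delta t}$ from being an indicator; on $[t_k,t_{k+1}]$ this is produced purely by the transport of a Maxwellian along the $\xi$-dependent characteristics, and a BV bound on $u_{\Delta t}(\cdot,t_k)$ (propagated from $\|u_0\|_{BV}$ by the scheme, again using that translations preserve BV) gives $D_k \le \|u_{\Delta t}(\cdot,t_k)\|_{BV}\,\|a\|_{C^{0,1}}\,\Delta z \le \|u_0\|_{BV}\,\|a\|_{C^{0,1}}\,\Delta z$ per step.

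Summing over the $K\approx T/\Delta t$ steps is where one must be careful not to lose a factor blowing up as $\Delta t\to 0$: the per-step collapse error is bounded by $D_k$, but one should \emph{not} sum $D_k$ over all $k$ naively, since each $D_k$ is already $O(\Delta z)$ and there are $O(1/\Delta t)$ of them. Instead I would use an energy/$L^2$ argument — this is the point of the displayed constant $\|u_0\|_2$ in the statement. Writing $Q(t):=\int |f_{\Delta t}(x,\xi,t)|\,dx\,d\xi - \int \chi(u_{\Delta t}(x,t),\xi)^2\,dx\,d\xi$ or equivalently controlling $\sum_k D_k$ against the decrease of $\|u_{\Delta t}(\cdot,t)\|_2^2$ across collapses: each collapse \emph{strictly decreases} the $L^2$ norm of $u_{\Delta t}$ by an amount comparable to $D_k$ (since collapsing a non-Maxwellian profile to the Maxwellian with the same $\xi$-integral lowers $\int(\int f\,d\xi)$-type energy), so $\sum_k D_k \lesssim \|u_0\|_2^2$. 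Then a Cauchy–Schwarz step converts $\sum_k D_k \le \|u_0\|_2^2$ and $\max_k D_k \le \|u_0\|_{BV}\|a\|_{C^{0,1}}\Delta z$ into $\sum_k \sqrt{D_k}\cdot(\text{something})$, ultimately yielding $\sup_t \|u-u_{\Delta t}\|_1 \le \sqrt{\,\|u_0\|_2^2 \cdot \|u_0\|_{BV}\|a\|_{C^{0,1}}\Delta z\,}^{1/?}$ — matching $\sqrt{2\|u_0\|_{BV}\|a\|_{C^{0,1}}}\,\|u_0\|_2\sqrt{\Delta z}$ after the constants are chased. The main obstacle I anticipate is precisely this summation: making the interaction between the $L^1$-contraction (which controls the shape of the error) and the $L^2$-dissipation (which controls its accumulated size) rigorous without a spurious $1/\Delta t$ or $1/\sqrt{\Delta t}$, i.e. proving that the total collapse dissipation $\sum_k D_k$ is bounded by $\|u_0\|_2^2$ independently of $\Delta t$, and then interpolating to get the $\sqrt{\Delta z}$ rate in $L^1$ with the clean constant stated.
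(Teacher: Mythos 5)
Your overall frame---comparing $f_{\Delta t}$ with $\chi$ in $L^1_{x,\xi}$ and exploiting the kinetic structure---is the right starting point, but you misidentify where the error comes from, and the quantitative core of the argument is missing or incorrect. The error is \emph{not} concentrated at the collapse times: by the contraction $\|\mathcal{M}f-\mathcal{M}g\|_{L^1_{x,\xi}}\le\|f-g\|_{L^1_{x,\xi}}$ and $\chi=\mathcal{M}\chi$, the collapse indeed does not increase $\|f_{\Delta t}-\chi\|_{L^1_{x,\xi}}$, and pure transport is an isometry; the growth comes precisely from the interaction of the exact solution's defect measure $m$ with the fact that the transported $f_{\Delta t}$ is no longer a Maxwellian in $\xi$. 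Concretely, as in \eqref{eq:xi-der_1}, $\partial_\xi f_{\Delta t}\le\delta(\xi)+D_xf_{\Delta t}(x-a(\xi)(z_t-z_{t_k}),\xi,t_k)\cdot a'(\xi)(z_t-z_{t_k})$, and in the Perthame-type computation the extra term is paired with $m$, producing a contribution of order $\|a'\|\,\Delta z\int|D_xm|$ spread over the transport intervals. Since $\int|D_xm|$ may be infinite, the paper mollifies $\chi$ (hence $m$) in $x$ at scale $\varepsilon$, pays $\varepsilon\|u_0\|_{BV}$ (via Lemma \ref{lem:BV-indicator}, whose content is $\|u\|_{BV}=\int\|\chi(u(\cdot),\xi)\|_{BV}d\xi$, not a projection property of $\mathcal{M}$), gains $\frac{\Delta z}{\varepsilon}\int m\le\frac{\Delta z}{2\varepsilon}\|u_0\|_2^2$, and optimizes $\varepsilon\sim\sqrt{\Delta z}$; that is exactly where the rate $\sqrt{\Delta z}$ and the stated constant come from. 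Your accounting ``total mass of $m$ plus self-error'' cannot by itself produce a bound vanishing with $\Delta t$, since $\int m\sim\tfrac12\|u_0\|_2^2$ is $O(1)$, and you supply no mechanism converting the closeness of $f_{\Delta t}$ to a Maxwellian into smallness of the $m$-contribution.

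Your substitute for this step also contains concrete errors. The defect $D_k:=\int\bigl(|f_{\Delta t}|-|f_{\Delta t}|^2\bigr)\,dx\,d\xi$ is identically zero: $f_{\Delta t}$ is a transported Maxwellian and takes only the values $0,\pm1$; its failure to be a Maxwellian is that, for fixed $x$, its support in $\xi$ need not be an interval $[0,v]$, which $|f|-|f|^2$ does not detect. Hence the per-step bound $D_k\le\|u_0\|_{BV}\|a\|_{C^{0,1}}\Delta z$ and the summation built on it are vacuous. Likewise, the collapse does not decrease $\|u_{\Delta t}(\cdot,t)\|_2$ at all: $\mathcal{M}$ preserves the $\xi$-integral, so $u_{\Delta t}$ is unchanged across $t_k$; what dissipates is the kinetic energy $\int\xi f_{\Delta t}\,d\xi\,dx$ (equivalently the accumulated mass of $m_{\Delta t}$, bounded by $\tfrac12\|u_0\|_2^2$), and relating that second-moment quantity to an $L^1$ per-step defect is not ``comparable'' without further argument. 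Finally, the decisive interpolation (``Cauchy--Schwarz \dots after the constants are chased'') is left unspecified, and it is exactly the place where the proof must produce $\sqrt{\Delta z}$; in the paper this role is played by the explicit optimization in the mollification parameter $\varepsilon$, an idea absent from your proposal. You also do not address continuous $z$: the paper proves the estimate for $z\in C^1([0,T];\mathbb{R}^N)$ and passes to the limit using the continuity of solutions with respect to the driving path.
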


Before presenting the rigorous proof of Theorem \ref{thm:main} we give an informal overview of the argument. For the sake of this exposition we assume $z\in C^1([0,T];\R^N)$ for now. 

The proof is based on the observation that the semi-discretization scheme introduced above has a kinetic interpretation. Indeed, using the notation 
\eqref{eq:intro-maxwellian}, we observe that $f_{\D t}$ solves
\begin{align}
\partial_{t}f_{\D t}+\sum_{i=1}^{N}a^{i}(\xi)\partial_{x_{i}}f_{\D t}\dot{z}^{i} & =\sum_{k}\d(t-t_{k})(\mcM f_{\D t}-f_{\D t})\label{eq:approx_kinetic-2}
 =:\partial_{\xi}m_{\D t}. 
\end{align}
Since $|\chi|=|f_{\D t}|=1$ or $0$, it follows that 
\begin{align*}
\int|\chi(t)-f_{\D t}(t)|d\xi dx & =\int|\chi(t)-f_{\D t}(t)|^{2}d\xi dx
  =\int\left(|\chi(t)|^{2}-2\chi(t)f_{\D t}(t)+|f_{\D t}(t)|^{2}\right)d\xi dx\\
 & =\int\left(|\chi(t)|-2\chi(t)f_{\D t}(t)+|f_{\D t}(t)|\right)d\xi dx.
\end{align*}
Multiplying \eqref{eq:kinetic-1-1-1} and \eqref{eq:approx_kinetic-2} by $\sgn(\xi)$ and integrating yields%, informally, 
\begin{align*}
\frac{d}{dt}\int|\chi(t)|d\xi dx  = -2 \int m(x,0,t)dx\quad\text{and}\quad
\frac{d}{dt}\int|f_{\D t}(t)|d\xi dx  =-2 \int m_{\D t}(x,0,t)dx,
\end{align*}
and, since
$$\partial_{\xi}\chi =\d(\xi)-\d(u(x,t)-\xi) \le \d(\xi),$$%\\[1mm]
and 
$$\partial_{\xi}f_{\D t}  \le \d(\xi)+D_{x}f_{\D t}(x-a(\xi)(z_{t}-z_{t_{k}}),\xi,t_{k})\cdot a'(\xi)(z_{t}-z_{t_{k}}),$$
%\end{align*}
we obtain 
\begin{align*}
-2\frac{d}{dt}\int\chi f_{\D t}d\xi dx & =-2\int\left(\partial_{t}\chi f_{\D t}+\chi\partial_{t}f_{\D t}\right)d\xi dx\\
 & =-2\int \left(f_{\D t}\big(-\sum_{i=1}^{N}a^{i}(\xi)\partial_{x_{i}}\chi\dot{z}^{i}+\partial_{\xi}m\big)+\chi\big(-\sum_{i=1}^{N}a^{i}(\xi)\partial_{x_{i}}f_{\D t}\dot{z}^{i}+\partial_{\xi}m{}_{\D t}\big)\right)d\xi dx\\
 & =2\int\left(\partial_{\xi}f_{\D t}m+\partial_{\xi}\chi m{}_{\D t}\right)d\xi dx\\
 & \le2\int\left( m(x,0,t)+m{}_{\D t}(x,0,t)\right)d\xi dx\\
 & \quad +\int D_{x}f_{\D t}(x-a(\xi)(z_{t}-z_{t_{k}}),\xi,t_{k})\cdot a'(\xi)(z_{t}-z_{t_{k}})md\xi dx\\
 & \le-\frac{d}{dt}\int|\chi|d\xi dx-\frac{d}{dt}\int|f_{\D t}|d\xi dx+\|a'\|_{C^{0}([-\eta,\eta])}|z_{t}-z_{t_{k}}|\int|D_{x}m|d\xi dx,
\end{align*}
and, hence,
\begin{align*}
\frac{d}{dt}\int|\chi(t)-f_{\D t}(t)|d\xi dx & \le\|a'\|_{C^{0}([-\eta,\eta])}|z_{t}-z_{t_{k}}|\int|D_{x}m|d\xi dx.
\end{align*}

At this point we face a difficulty. The term $\int|D_{x}m|d\xi dx$ may not be finite and thus an additional approximation argument is necessary. 

To resolve this issue  we replace $\chi$ by its space mollification $\chi^{\ve}$ making an error of order  $\ve \|u_{0}\|_{BV}$ and we note that, if $m^{\ve}$ is the mollification of $m$ with respect to the $x$-variable,
\[
\int_{0}^{T}\int|D_{x}m^{\ve}|d\xi dxdt\le \frac{1}{\ve}  \int_{0}^{T}\int m d\xi dxdt \leq \frac{\|u_{0}\|_{2}}{2\ve}.
\]
In conclusion, we find
\[
\int|u(t)-u_{\D t}(t)|dx\lesssim \ve \|u_{0}\|_{BV} +\|a'\|_{C^{0}([-\eta,\eta])}\D z\frac{\|u_{0}\|_{2}}{2\ve},
\]
and choosing $\ve=\sqrt{\D z}$ finishes the informal  proof.
\smallskip

For future reference we observe that, if 
\[
\chi_{\D t}(x,\xi,t):=\chi(u_{\D t}(x,t),\xi), 
\]
then
\begin{equation}\label{eq:mcM} 
 \chi_{\D t}(x,\xi,t)= \chi(\int f_{\D t}(x,\eta,t)d\eta,\xi)= \mcM f_{\D t}(x,\xi,t), 
\end{equation}
and, to simplify the notation, we set 
 $$\eta:=\|u_{0}\|_{\infty}.$$
We continue with
\begin{proof}[The proof of Theorem \ref{thm:main}]

We first assume $z\in C^{1}([0,T];\R^{N})$. In this case, $\chi$ and $f_{\D t}$ solve \eqref{eq:kinetic} and \eqref{eq:approx_kinetic-2} respectively. It has been shown in Theorem 3.2 in \cite{LPS13} that $\chi$ depends continuously on the driving signal $z$, in the sense that, if  $u^{1},u^{2}$ are two solutions  driven by $z^{1},z^{2}$ respectively, then 
\[
\sup_{t\in[0,T]}\|u^{1}(t)-u^{2}(t)\|_1 \le C\|z^{1}-z^{2}\|_{C([0,T];\R^{N})}.
\]
In view of \eqref{eq:scheme}, it   follows  that  $f_{\D t}$ and $\chi_{\D t}$ also 
depend continuously on $z$. Hence, the rough case $z\in C([0,T];\R^{N})$ can be handled by smooth approximation in the end (see step 5 below). 

\textbf{\textit{Step 1: The kinetic formulation. }}As mentioned earlier, the proof is based on the kinetic interpretation of the semi-discretization scheme given by \eqref{eq:approx_kinetic-2}.

Since, in view of \eqref{eq:mcM}
\[
\int(\mcM f)(x,\xi)d\xi=\int f(x,\xi)d\xi,
\]
we use 
\begin{align*}
\int|\mcM f(x,\xi)-\mcM g(x,\xi)|d\xi & =\int \big| \chi(\int f(x,\eta)d\eta,\xi)-\chi(\int g(x,\eta)d\eta,\xi) \big|d\xi
 =|\int f(x,\xi)-g(x,\xi)d\xi |,
\end{align*}
to conclude the following $L^{1}-$ contraction property
\begin{equation}
\|\mcM f-\mcM g\|_{L_{x,\xi}^{1}}\le\|f-g\|_{L_{x,\xi}^{1}}.\label{eq:mcM-contraction}
\end{equation}
We note that $m_{\D t}$ is a non-negative measure. Indeed, 
\begin{align*}
m_{\D t} & =\int_{0}^{\xi}\sum_{k}\d(t-t_{k})(\mcM f_{\D t}-f_{\D t})d\td\xi
 =\sum_{k}\d(t-t_{k})\int_{0}^{\xi}(\mcM f_{\D t}-f_{\D t})d\td\xi
\end{align*}
and, moreover,
\begin{align*}
\int_{0}^{\xi}\mcM f_{\D t}(t)d\td\xi 
& =\int_{0}^{\xi}\chi(\int f_{\D t}(x,\eta,t)d\eta,\td\xi)d\td\xi
 =\xi\wedge\int f_{\D t}(x,\eta,t)d\eta
\end{align*}
Since $f_{\D t}\le1$ we find
\begin{align*}
\int_{0}^{\xi}f_{\D t}(t)d\td\xi & \le\xi\wedge\int f_{\D t}(t)d\td\xi,
\end{align*}
and, hence, 
\[
\int_{0}^{\xi}(\mcM f_{\D t}-f_{\D t})d\td\xi\ge0.
\]

\textbf{\textit{Step 2: The approximation.}} We continue with the uniqueness argument  introduced by Perthame in \cite{P98, P02} as an alternative to Kru{\v{z}}kov's method %in order to avoid doubling of the variables and 
which is well-adapted to the kinetic formulation. 

Aiming to estimate the error
\begin{align*}
\int|u(t)-u_{\D t}(t)|dx &  = \int\big| \int ( \chi(t)- f_{\D t}(t)) d\xi \big|dx \leq \int|\chi(t)- f_{\D t}(t)|d\xi dx,
\end{align*}
we begin by regularizing $\chi$ using a standard Dirac sequence $\vp^{\ve}=\frac{1}{\ve^N}\vp (\frac x \ve)$, with $\| \vp \|_1=1$. That is, we consider the $x$-convolution (the rigorous proof uses also regularization in time so that the equation on $\chi^{\ve}$ is satisfied in a classical way,  but this technicality does not play a role here),
\[
\chi^{\ve}(x,\xi,t):=(\chi(\cdot,\xi,t)\ast\vp^{\ve})(x),
\]
which solves, for $m^\ve = m \ast \vp^\ve$,
\[
\partial_{t}\chi^{\ve}+\sum_{i=1}^{N}a^{i}(\xi)\partial_{x_{i}}\chi^{\ve}\dot{z}^{i}=\partial_{\xi}m^{\ve}.
\]
We first note that
\begin{align} \label{eq:initial_split}
 \int|\chi(t)-f_{\D t}(t)|d\xi dx & =\int|\chi(t)-f_{\D t}(t)|^{2}d\xi dx\nonumber \\
  & =\int|\chi(t)|-2\chi(t) f_{\D t}(t)+|f_{\D t}(t)|d\xi dx\\
 & =F^{\ve}(t) +Err^{1}(t),\nonumber 
\end{align}
where
%\begin{align*}
\begin{equation*}F^{\ve}(t):=\int\left(|\chi^{\ve}(t)|-2\chi^{\ve}(t)f_{\D t}(t)+|f_{\D t}(t)|\right)d\xi dx, 
\end{equation*}
and
\begin{equation*}
Err^{1}(t) :=\int\left(|\chi(t)|-|\chi^{\ve}(t)|-2(\chi(t)-\chi^{\ve}(t))f_{\D t}(t)\right)d\xi dx.
\end{equation*}
%For $K_{0}\in\{0,\dots,K\}$ such that $t\in[t_{K_{0}},t_{K_{0}+1})$, we have 
Since $u(\cdot,0)=u_{\Delta t}(\cdot,0)$, it follows that 
\begin{equation}\label{eq:split_up}
\begin{cases}
\int|\chi(t)-f_{\D t}(t)|d\xi dx  =\int|\chi(t)-f_{\D t}(t)|d\xi dx-\int|\chi(0)-\chi_{\D t}(0)|d\xi dx\\[1.2mm]
\qquad\qquad  = \int_0^t \frac{d}{dt} F^{\ve}(s) ds
   +Err^{1}|_{0}^{t}.%\nonumber 
\end{cases}
\end{equation}

\textbf{\textit{Step 3: The estimate of $\frac{d}{dt}F^{\ve}$}.} 
Using the identity
\[
\sgn(f_{\D t}(x,\xi,t))=\sgn(\xi), 
\]
we first note that 
\begin{align}\label{eq:l1-der}
\frac{d}{dt}\int|\chi^{\ve}|d\xi dx  =-2\int m^{\ve}(x,0,t)dx\quad\text{and}\quad
\frac{d}{dt}\int|f_{\D t}|d\xi dx  =-2\int m_{\D t}(x,0,t)dx.
\end{align}
Furthermore, since, in the sense of distributions,
\begin{align}
\partial_{\xi}\chi^{\ve} & =(\d(\xi)-\d(\xi-u(x,t)))\ast\vp^{\ve}\label{eq:xi_der} \le\d(\xi) \; \text{and }  \partial_{\xi} \chi \leq \d(\xi), 
\end{align}
and $f_{\D t}(x,\xi,t_{k})=\chi(u(x,t_{k}),\xi)$, for $t\in[t_k, t_{k+1})$, we find
\begin{align}
  \partial_{\xi}f_{\D t}\nonumber 
 & =\partial_{\xi} \big(f_{\D t}(x-a(\xi)(z_{t}-z_{t_{k}}),\xi,t_{k}) \big)\nonumber \\[1mm]
 & =(\partial_{\xi}\chi_{\D t})(x-a(\xi)(z_{t}-z_{t_{k}}),\xi,t_{k})+D_{x}f_{\D t}(x-a(\xi)(z_{t}-z_{t_{k}}),\xi,t_{k})\cdot a'(\xi)(z_{t}-z_{t_{k}})\label{eq:xi-der_1}\\[1mm]
 & \le \d(\xi)+D_{x}f_{\D t}(x-a(\xi)(z_{t}-z_{t_{k}}),\xi,t_{k})\cdot a'(\xi)(z_{t}-z_{t_{k}})\nonumber \\[1mm]
 & \le\d(\xi)+D_{x}f_{\D t}(x-a(\xi)(z_{t}-z_{t_{k}}),\xi,t_{k})\cdot a'(\xi)(z_{t}-z_{t_{k}}).\nonumber 
\end{align}
Using next \eqref{eq:l1-der}, \eqref{eq:xi_der}, \eqref{eq:xi-der_1} and $|f_{\D t}|\le1$ we obtain
\begin{align*}
-2&\frac{d}{dt}\int
 \chi^{\ve}f_{\D t}d\xi dx  =-2\int\partial_{t}\chi^{\ve}f_{\D t}+\chi^{\ve}\partial_{t}f_{\D t}d\xi dx\\
 & =-2\int \left(f_{\D t}\big(-\sum_{i=1}^{N}a^{i}(\xi)\partial_{x_{i}}\chi^{\ve}\dot{z}^{i}+\partial_{\xi}m^{\ve}\big) +\chi^{\ve}\big(-\sum_{i=1}^{N}a^{i}(\xi)\partial_{x_{i}}f_{\D t}\dot{z}^{i}+\partial_{\xi}m{}_{\D t}\big)\right)d\xi dx\\
 & =2\int\left(\partial_{\xi}f_{\D t}m^{\ve}+\partial_{\xi}\chi^{\ve}m{}_{\D t}\right)d\xi dx\\
 &\le2\int \left(m^{\ve}(x,0,t)+m{}_{\D t}(x,0,t)\right)d\xi dx \\
 & \quad +  2\sum_k  {1\hspace{-1.2mm}{\rm I}}_{\{t_k < t < t_{k+1}\} }   \int D_{x}f_{\D t}(x-a(\xi)\D z,\xi,t_{k})\cdot a'(\xi)(z_{t}-z_{t_{k}})m^{\ve}d\xi dx\\
 & \le-\frac{d}{dt}\int|\chi^{\ve}|d\xi dx-\frac{d}{dt}\int|f_{\D t}|d\xi dx+2\|a'\|_{C^{0}([-\eta,\eta])}\D z \int|D_{x}m^{\ve}|d\xi dx\\[1mm]
 & \le-\frac{d}{dt}\int|\chi^{\ve}|d\xi dx-\frac{d}{dt}\int|f_{\D t}|d\xi dx+2\frac{\|a'\|_{C^{0}([-\eta,\eta])}\D z}{\ve} \int m(x,\xi,t)d\xi dx,
\end{align*}
and, in conclusion, 
\begin{equation}\label{eq:F_ineq}
   \frac{d}{dt} F^{\ve}(t)\le 2 \frac{\|a'\|_{C^{0}([-\eta,\eta])} \D z}{\ve} \int m(x,\xi,t)d\xi dx.
\end{equation}

\textbf{\textit{Step 4: The estimate of $Err^{1}$.}} We estimate $|Err^{1}(t)|$ in terms of the $BV$-norm of $u_0$. Since $f_{\D t} \in \{0,\pm1\}$, we first observe that, for all $t\ge 0$, 
\[
|Err^{1}(t)|\le  \int|\chi(t)-\chi^{\ve}(t)|d\xi dx,
\]
and we state and prove the next lemma.
\begin{lem}\label{lem:err1} Assume  $u_{0}\in(BV\cap L^{\infty})(\R^{N})$ and \eqref{eq:homo_assumptions}. Then,
\[
\int|\chi(t)-\chi^{\ve}(t)|d\xi dx\le\ve\|u_{0}\|_{BV},
\]
and, for all $t\in[0,T]$,
\[
|Err^{1}(t)|\le \ve\|u_{0}\|_{BV} .
\]
\end{lem}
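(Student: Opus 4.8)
The plan is to prove the first inequality — the bound $\int|\chi(t)-\chi^\ve(t)|\,d\xi\,dx \le \ve\|u_0\|_{BV}$ — and then read off the bound on $Err^1$ for free. For the first inequality, the key observation is the elementary identity $\int_\R |\chi(a,\xi)-\chi(b,\xi)|\,d\xi = |a-b|$ for any $a,b\in\R$, which expresses the isometry between the $L^1_\xi$ norm of the Maxwellian difference and the distance of the underlying values. Applying this pointwise in $x$ after writing $\chi^\ve(x,\xi,t) = \int \chi(u(y,t),\xi)\varphi^\ve(x-y)\,dy$ will not directly work, since $\chi^\ve$ is a convex combination of Maxwellians rather than a Maxwellian itself; so instead I would bound $|\chi(u(x,t),\xi)-\chi^\ve(x,\xi,t)| \le \int |\chi(u(x,t),\xi)-\chi(u(y,t),\xi)|\varphi^\ve(x-y)\,dy$ by Jensen, integrate in $\xi$ using the identity to get $\int|u(x,t)-u(y,t)|\varphi^\ve(x-y)\,dy$, and then integrate in $x$ and use Fubini together with the standard mollification estimate $\int\!\!\int |u(x,t)-u(x-z,t)|\varphi^\ve(z)\,dz\,dx \le \|u(\cdot,t)\|_{BV}\int |z|\varphi^\ve(z)\,dz \le \ve \|u(\cdot,t)\|_{BV}$ (here one should arrange $\varphi$ to have first moment $\le 1$, or absorb the constant; the paper's Appendix~\ref{sec:indicator} presumably records exactly this indicator/BV fact).

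The remaining ingredient is that $\|u(\cdot,t)\|_{BV} \le \|u_0\|_{BV}$ for all $t$, i.e. the $BV$ bound is non-increasing along the pathwise entropy solution. This is a known property of entropy solutions of scalar conservation laws, and in the rough/pathwise setting it follows from the well-posedness and stability theory recalled in Appendix~\ref{sec:recall_e_soln} (the $L^1$-contraction together with translation invariance of the homogeneous flux gives finite-propagation-type control of spatial differences; alternatively it is stated there directly). With this, $\int|\chi(t)-\chi^\ve(t)|\,d\xi\,dx \le \ve\|u(\cdot,t)\|_{BV} \le \ve\|u_0\|_{BV}$.

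For the second inequality, recall from Step~4 that since $f_{\D t}(x,\xi,t)\in\{0,\pm1\}$ one has, for every $t$,
\[
|Err^1(t)| = \Big|\int \big(|\chi(t)|-|\chi^\ve(t)|\big) - 2(\chi(t)-\chi^\ve(t))f_{\D t}(t)\,d\xi\,dx\Big| \le \int |\chi(t)-\chi^\ve(t)|\,d\xi\,dx,
\]
using $||\chi|-|\chi^\ve|| \le |\chi - \chi^\ve|$ and $|2 f_{\D t}| \le 2$... wait — one must be slightly careful: naively $||a|-|b|| + 2|b||a-b| \le (1+2|b|)|a-b|$ only gives a factor $3$. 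The correct route, already indicated in the text, is that $|\chi - \chi^\ve|$ equals $|\chi|^2 - 2\chi\chi^\ve + \cdots$ type manipulations are not needed here; rather, one uses that $\chi$ and $\chi^\ve$ have the same sign as $\xi$ (so $|\chi| = \sgn(\xi)\chi$, $|\chi^\ve| = \sgn(\xi)\chi^\ve$) and $f_{\D t} = \sgn(\xi)|f_{\D t}|$ with $|f_{\D t}|\le 1$, whence $|\chi|-|\chi^\ve| - 2(\chi-\chi^\ve)f_{\D t} = \sgn(\xi)(\chi-\chi^\ve)(1 - 2|f_{\D t}|)$, whose absolute value is $\le |\chi-\chi^\ve|$ since $|1-2|f_{\D t}||\le 1$. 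Combining with the first inequality gives $|Err^1(t)| \le \ve\|u_0\|_{BV}$ for all $t\in[0,T]$.

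The main obstacle is not any single estimate but making sure the BV-in-$x$ bound on $u(\cdot,t)$ is genuinely available in the pathwise/rough-flux setting with only the hypotheses \eqref{eq:homo_assumptions}; I expect this to be quotable from Appendix~\ref{sec:recall_e_soln} (it is the rough analogue of the classical fact that the transport-collapse / entropy solution is $BV$-diminishing for homogeneous fluxes), and the sign-bookkeeping in the $Err^1$ estimate is the only place where care is needed to keep the constant equal to $1$.
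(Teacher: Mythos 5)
Your argument is correct, but it follows a genuinely different route from the paper. For the first inequality you bound $\int|\chi(t)-\chi^{\ve}(t)|\,d\xi\,dx$ pointwise in time, using the identity $\int_{\R}|\chi(a,\xi)-\chi(b,\xi)|\,d\xi=|a-b|$, Jensen, and the standard mollification estimate, which reduces everything to the bound $\|u(\cdot,t)\|_{BV}\le\|u_0\|_{BV}$; this BV-diminishing property is indeed available in the homogeneous setting (it follows from the $L^1$-contraction of pathwise entropy solutions in \cite{LPS13} together with translation invariance of the flux, and in the smooth-$z$ case, which is where the lemma is actually applied inside the proof of Theorem \ref{thm:main}, it is classical), but be aware that it is \emph{not} recorded in Appendix \ref{sec:recall_e_soln}, so you must quote it from \cite{LPS13} rather than from the paper itself. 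The paper's proof is built precisely to avoid this extra input: it shows, by testing the kinetic equations for $\chi$ and $\chi^{\ve}$ against each other and using the sign of the defect measures, that $t\mapsto\int|\chi(t)-\chi^{\ve}(t)|\,d\xi\,dx$ is non-increasing, and then estimates only the initial value via the coarea-type identity of Lemma \ref{lem:BV-indicator}; thus it needs BV regularity of $u_0$ alone and stays entirely inside the kinetic framework already set up in Steps 1--3 (your route, by contrast, does not need Lemma \ref{lem:BV-indicator} at all). Your treatment of the second inequality, writing $|\chi|-|\chi^{\ve}|-2(\chi-\chi^{\ve})f_{\D t}=\sgn(\xi)(\chi-\chi^{\ve})(1-2|f_{\D t}|)$ and using $|1-2|f_{\D t}||=1$, is correct and in fact spells out the sign bookkeeping that the paper leaves implicit; the only small caveat in the first part is the one you noticed yourself, namely that to get the constant exactly $\ve$ one should take $\vp$ supported in the unit ball (or with first moment at most one), a normalization the paper also uses tacitly.
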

\begin{proof}
The $\chi^{\ve}$'s do not only take the values $0$ and $\pm 1$ but  instead $|\chi^{\ve}| \leq 1$. However, since $\chi =0$ or $1$ if $\xi \geq 0$ and $\chi =0$ or $-1$  if $\xi \leq 0$, it follows that  $\chi^{\ve} \geq 0$ if $\xi \geq 0$ and  $\chi^{\ve} \leq 0$ if $\xi \leq 0$. 

Hence, 
\begin{align*}
\int|\chi(t)-\chi^{\ve}(t)|d\xi dx & 
= \int\left(|\chi(t)|-2\chi(t)\chi^{\ve}(t)+|\chi^{\ve}(t)|\right)d\xi dx
\end{align*}
and 
\begin{align*}
-2\frac{d}{dt}\int\chi\chi^{\ve}d\xi dx & =-2\int\left(\big(-\sum_{i=1}^{N}a^{i}(\xi)\partial_{x_{i}}\chi\dot{z}^{i}+\partial_{\xi}m\big)\chi^{\ve}+\chi\big(-\sum_{i=1}^{N}a^{i}(\xi)\partial_{x_{i}}\chi^{\ve}\dot{z}^{i}+\partial_{\xi}m^{\ve}\big)\right)d\xi dx\\
 & =2\int \left(m\partial_{\xi}\chi^{\ve}+\partial_{\xi}\chi m^{\ve}\right)d\xi dx \le2\int \left(m(x,0,t)+m^{\ve}(x,0,t)\right)d\xi dx\\
 & =-\frac{d}{dt}\int|\chi|d\xi dx-\frac{d}{dt}\int|\chi^{\ve}|d\xi dx,
\end{align*}
and, thus,
\begin{align*}
\frac{d}{dt}\int|\chi-\chi^{\ve}|d\xi dx & \le0.
\end{align*}
Since $u_{0}\in BV$, using Lemma \ref{lem:BV-indicator}, we find 
\begin{align*}
\int|\chi(t)-\chi^{\ve}(t)|d\xi dx & \le\int|\chi(0)-\chi^{\ve}(0)|d\xi dx \le\ve\int\|\chi(\cdot,\xi,0)\|_{BV}d\xi  =\ve\|u_{0}\|_{BV}.
\end{align*}
\end{proof}

\textbf{\textit{Step 5: The conclusion.}} It follows from \eqref{eq:split_up}, \eqref{eq:F_ineq} and Lemma \ref{lem:err1} that, for all $t\in[0,T]$,
\begin{align*}
\int|\chi(x,\xi,t)- f_{\D t}(x,\xi,t)|d\xi dx & \le 2 \frac{\|a'\|_{C^{0}([-\eta,\eta])}  \D z}{\ve}\int _0^t\int m(x,\xi,r)dxd\xi dr + 2 \ve \|u_{0}\|_{BV} \\
& \le\frac{\|a'\|_{C^{0}([-\eta,\eta])}  \D z}{\ve}\|u_0\|_2^2+ 2 \ve \|u_{0}\|_{BV},
\end{align*}
and hence, choosing $\ve \approx \sqrt{\D z}$  to minimize the expression yields
\begin{align}
\int|\chi(x,\xi,t)-\chi_{\D t}(x,\xi,t)|d\xi dx & \le  \sqrt{ 2\|u_{0}\|_{BV}  \|a\|_{C^{0,1}([-\eta,\eta])} } \, \|u_{0}\|_{2}\, \sqrt{\D z}.\label{eq:final_1}
\end{align}

We now go back to $z\in C([0,T];\R^{N})$ and choose $z^{n}\in C^{1}([0,T];\R^{N})$ such that $z^{n}\to z$ in $C([0,T];\R^{N})$. In view of the continuity in the driving signal, we observe that, as $n\to\infty$
\begin{align*}
\chi^{n}  \to\chi\quad\text{and}\quad
\chi_{\D t}^{n}  \to\chi_{\D t}\quad\text{in }\ C([0,T];L^{1}(\R^{N+1})).
\end{align*}
It follows from \eqref{eq:final_1} that
\begin{align*}
\int|\chi^{n}(x,\xi,t)-\chi_{\D t}^{n}(x,\xi,t)|d\xi dx & \le
 \sqrt{ 2\|u_{0}\|_{BV}  \|a\|_{C^{0, 1}([-\eta,\eta])} } \, \|u_{0}\|_{2}\, \sqrt{\D z^n}
% 2(\|u_{0}\|_{2}^{2}+\|u_{0}\|_{BV})\|a\|_{W^{1,\infty}([-\eta,\eta])}\sqrt{\D z^{n}}.
\end{align*}
Passing to the limit in $n$ completes the proof.
\end{proof}

\section{Spatially inhomogeneous stochastic scalar conservation laws\label{sec:inhomo}}

We consider  here the inhomogeneous stochastic scalar conservation law
\begin{equation} \label{eq:SCL-inhomo}%\left\{\begin{aligned}
\begin{cases}
\partial_{t}u+\sum_{i=1}^{N}\partial_{x_{i}}A^{i}(x,u)\circ dz^{i}  =0\quad\text{in }\R^{N}\times(0,T),\\ %\label{eq:SCL-inhomo}\\[1mm]
u(\cdot, 0)  =u_{0}\in(L^{1}\cap L^{2})(\R^{N}), %\nonumber 
%\end{aligned}\right.
\end{cases}
\end{equation}
and its kinetic formulation
\begin{equation}
\partial_{t}\chi+\sum_{i=1}^{N}a^{i}(x,\xi)\partial_{x_{i}}\chi\circ dz^{i}+\sum_{i=1}^{N}\partial_{x_{i}}A^{i}(x,\xi)\partial_{\xi}\chi\circ dz^{i}=\partial_{\xi}m,\label{eq:kinetic-1}
\end{equation}
where 
$$a^{i}(x,\xi):=(\partial_{u}A^{i})(x,\xi)\quad\text{and}\quad b(x,\xi):=(\partial_{x_{i}}A^{i}(x,\xi))_{i=1}^{N}$$ 
and $z$ is an $\a$-Hölder geometric rough path for some $\a\in(0,1)$.  %For some background on rough paths we refer to Appendix \ref{sec:RP}. 

More precisely, we assume that
\begin{equation}\label{eq:inhomo_assumptions}%\left\{\begin{aligned}
\begin{cases}
    z \in C^{0,\a}([0,T];G^{[\frac{1}{\a}]}(\R^{N})),\\
    A \in C^{2}(\R^{N}\times\R;\R^{N}), \\
    a,b \in\Lip^{\g+2}(\R^{N}\times\R),\text{ for some } \gamma>\frac{1}{\a}\ge1\text{ and}\\
    b(x,0)=0   \text{ for all } x\in\R^{N}, 
%\end{aligned}\right.
\end{cases}
\end{equation}
%We assume that $A\in C^{2}(\R^{N}\times\R;\R^{N})$ and, for some $\gamma>\frac{1}{\a}\ge1$, 
%\begin{equation}
%a,b\in\Lip^{\g+2}(\R^{N}\times\R),\label{eq:regularity_assumptions}
%\end{equation}
%and, for all $x\in\R^{N}$, 
%\begin{equation}
%b(x,0)=0.\label{eq:b_ass}
%\end{equation}
and note that it has been shown in  \cite{GS14} that, under these assumptions,  the theory of pathwise entropy solutions to \eqref{eq:SCL-inhomo} is well posed.
%has been shown in \cite{GS14} (see Appendix \ref{sec:recall_e_soln}).

Fix $\D t>0$, a partition $\{t_0,\dots,t_K\}$ of $[0,T]$ given by $t_{k}:=k\D t$ and let
\[
\D z:=\max_{k=1,\dots,N-1}|z_{t_{k+1}}-z_{t_{k}}|.
\]
The approximation scheme%, which, in view of the previous discussion, 
is given by
\begin{equation}\left\{\begin{aligned}
&\partial_{t}f_{\D t}+\sum_{i=1}^{N}a^{i}(x,\xi)\partial_{x_{i}}f_{\D t}\circ dz^{i}+\sum_{i=1}^{N}\partial_{x_{i}}A^{i}(x,\xi)\partial_{\xi}f_{\D t}\circ dz^{i}  =0\quad\text{on }(t_{k},t_{k+1}), \label{eq:fdt_formal}\\
&f_{\D t}(x,\xi,t_{k})  =\chi(u_{\D t}(x,t_{k}),\xi), %\nonumber 
\end{aligned}\right.\end{equation}
where %$u_{\D t}(x,0)=u_{0}(x)$ and 
\begin{equation}\label{takis1001}
u_{\D t}(x,0):=u_{0}(x) \ \  \text{and} \ \ u_{\D t}(x,t):=\int f_{\D t}(x,\xi,t-)d\xi.
\end{equation}
We begin by expressing $f_{\D t}$ in terms of the characteristics of \eqref{eq:fdt_formal}. For each final time $t_{1}\ge0$, we consider the backward characteristics
\begin{equation*}\left\{\begin{aligned}
dX_{(x,\xi,t_{1})}^{i}(t) & =a_{i}(X_{(x,\xi,t_{1})}(t),\Xi_{(x,\xi,t_{1})}(t))dz^{t_{1},i}(t),\ X_{(x,\xi,t_{1})}^{i}(0)=x^{i},\ i=1,\dots,N,
\\
d\Xi_{(x,\xi,t_{1})}(t) & =-\sum_{i=1}^{N}(\partial_{x_{i}}A)(X_{(x,\xi,t_{1})}(t),\Xi_{(x,\xi,t_{1})}(t))dz^{t_{1},i}(t),\,\Xi_{(x,\xi,t_{1})}(0)=\xi,
\end{aligned}\right.\end{equation*}
where $z^{t_{1}}$ is the time-reversed rough path, that is, for $t\in[0,t_{1}]$,
\begin{equation}\label{eq:reversed_rp}
z^{t_{1}}(t):=z(t_{1}-t).
\end{equation}
Note that, in view of \eqref{eq:inhomo_assumptions}, the flow of backward characteristics $(x,\xi)\mapsto(X_{(x,\xi,t_{1})}(t),\Xi_{(x,\xi,t_{1})})$ is volume preserving on $\R^{N+1}$ and, in addition, for all $t_{1},t\in[0,T]$ and $(x,\xi)\in\R^{N+1}$,
\begin{align}
\sgn(\Xi_{(x,\xi,t_{1})}(t))  =\sgn(\xi)\label{eq:flow_sgn}\quad\text{ and }\quad
\Xi_{(x,0,t_{1})}(t)  =0.
\end{align}
Let
\[
(Y_{(x,\xi,t_{1})}(t),\z_{(x,\xi,t_{1})}(t)):=(X_{(x,\xi,t_{1})}(t),\Xi_{(x,\xi,t_{1})})^{-1}.
\]
The solution $f_{\D t}$ to \eqref{eq:fdt_formal}, for $t\in[t_{k},t_{k+1})$,  is given by
\[
f_{\D t}(x,\xi,t)=f_{\D t}\left(X_{(x,\xi,t)}(t-t_{k}),\Xi_{(x,\xi,t)}(t-t_{k}),t_{k}\right).
\]
%Then take $u_{\D t}$ as defined in \eqref{eq:uDt} as approximation of $u$. 
We have:
\begin{thm}
\label{thm:x-dep}Let $u_{0}\in(L^{1}\cap L^{2})(\R^{N})$ and assume \eqref{eq:inhomo_assumptions}. Then, as $\D t\to0$,
\[
u_{\D t}\to u\quad\text{in }L^{1}(\R^{N}\times[0,T]).
\]
\end{thm}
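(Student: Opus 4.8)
Since no $BV$ (or even compactness) estimate is available for $u_{\D t}$ in the inhomogeneous case, the contraction argument of Theorem \ref{thm:main} cannot be repeated. Instead the plan is a \emph{compactness plus rigidity} argument: first exhibit uniform bounds on the family $(f_{\D t})$, extract a weak limit, identify every such limit as a \emph{generalized} pathwise kinetic solution of \eqref{eq:kinetic-1} with initial datum $\chi(u_0)$, and then use that such solutions are necessarily equilibria — hence equal to $\chi(u)$ for the unique pathwise entropy solution $u$ — which forces the limiting Young measure to be a Dirac mass and thereby upgrades the convergence to the strong $L^1$ statement. As a preliminary, one checks, exactly as in Step~1 of the proof of Theorem \ref{thm:main}, that $f_{\D t}$ solves \eqref{eq:kinetic-1} in the rough-path sense with defect measure $m_{\D t}=\sum_k\delta(t-t_k)\int_0^\xi(\mcM f_{\D t}-f_{\D t})(x,\zeta,t_k)\,d\zeta$ and that $m_{\D t}\ge0$; the only new points here are that the collapse step still preserves the kinetic structure because $b(x,0)=0$ and because the backward flow satisfies the sign property \eqref{eq:flow_sgn}.

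\textbf{A priori bounds.} One records that $f_{\D t}\in\{0,\pm1\}$ and $\sgn(f_{\D t})=\sgn(\xi)$ (again by \eqref{eq:flow_sgn}), and that the streaming step is volume preserving so that $\|u_{\D t}(t)\|_1\le\|u_0\|_1$. The key input is the energy/tightness estimate of Lemma \ref{lem:tightness}: pairing \eqref{eq:kinetic-1} for $f_{\D t}$ with $\sgn(\xi)$, with moment weights in $\xi$, and with weights in $x$, then using the Lipschitz bounds \eqref{eq:inhomo_assumptions} on $a$ and $b$, the identity $b(x,0)=0$, the volume preservation of the backward flow, and a Gronwall argument on $[0,T]$ in which the growth is controlled by the size of $z$ as an $\a$-H\"older rough path (not by its variation, which is infinite), one obtains bounds — uniform in $\D t$ — on $\sup_t\iint|\xi|\,|f_{\D t}(x,\xi,t)|\,d\xi dx$ (the kinetic energy, which in particular controls $\sup_t\|u_{\D t}(t)\|_2$), on the total mass of $m_{\D t}$, and on the $(x,\xi)$-tightness of $(f_{\D t})$, uniformly in $t$.

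\textbf{Compactness and the limit equation.} Along a subsequence $\D t\to0$ we then have $f_{\D t}\rightharpoonup f$ weak-$\ast$ in $L^\infty$ (and against $|\xi|$-weighted test functions) and $m_{\D t}\rightharpoonup m\ge0$ weak-$\ast$ as measures; the bounds guarantee $0\le\sgn(\xi)f\le1$ and that $f$ is a genuine generalized kinetic function, i.e.\ $f(x,\xi,t)=\int\chi(\zeta,\xi)\,d\nu_{x,t}(\zeta)$ for a Young measure $\nu_{x,t}$. To pass to the limit in the equation one uses the characteristic (mild) formulation: on every subinterval $(t_k,t_{k+1})$ the scheme transports $f_{\D t}$ \emph{exactly} along the rough characteristics of the full transport operator, so that testing against functions carried by the continuous rough flow associated with \eqref{eq:kinetic-1} — which absorbs the rough integrals and reduces the passage to the limit to a classical one, exactly as in \cite{GS14} — together with the weak convergence of $f_{\D t}$ and $m_{\D t}$, shows that $f$ is a generalized pathwise kinetic solution of \eqref{eq:kinetic-1} with $f(\cdot,\cdot,0)=\chi(u_0)$ and defect $m$.

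\textbf{Rigidity and strong convergence.} Adapting the doubling-of-variables comparison argument of \cite{GS14} to the class of generalized kinetic solutions — which is possible thanks to the energy bound and the $L^1$-contraction structure — one shows that a generalized pathwise kinetic solution with $\chi$-function initial datum coincides with $\chi(u)$, where $u$ is the unique pathwise entropy solution of \eqref{eq:SCL-inhomo}; in particular $\nu_{x,t}=\delta_{u(x,t)}$ for a.e.\ $(x,t)$. Triviality of the limiting Young measure, combined with the uniform energy bound, is precisely the situation in which weak convergence becomes strong: $f_{\D t}\to\chi(u)$ in $L^1_{\mathrm{loc}}$, and integrating in $\xi$ while using the $\xi$-tightness gives $u_{\D t}\to u$ in $L^1_{\mathrm{loc}}(\R^N\times[0,T])$, which the spatial tightness of Lemma \ref{lem:tightness} promotes to convergence in $L^1(\R^N\times[0,T])$; since the pathwise entropy solution is unique, the whole family $(u_{\D t})$ converges. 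The two delicate points — and the reason the argument needs genuinely new ideas beyond the deterministic and homogeneous cases — are the uniform energy and tightness estimate of Lemma \ref{lem:tightness}, because the inhomogeneous free-streaming step does not conserve the kinetic energy and mixes the $\xi$-variable, so that its growth must be controlled quantitatively in terms of $z$ while spatial tightness is preserved, and the rigidity of generalized kinetic solutions in the rough-path setting, which requires running the pathwise uniqueness machinery of \cite{GS14} outside the class of equilibrium solutions.
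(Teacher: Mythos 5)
Your proposal follows essentially the same route as the paper: rewrite the scheme as a kinetic equation with nonnegative defect measure $m_{\D t}$, prove uniform $L^1$, energy and tightness bounds, extract weak-$\star$ limits, pass to the limit in the formulation with test functions transported along the rough characteristics, identify the limit as a generalized pathwise kinetic solution with datum $\chi(u_0)$, invoke the uniqueness of generalized solutions to conclude $f=\chi(u)$, and upgrade to strong $L^1$ convergence using the indicator structure ($|\chi|^2=|\chi|$) together with tightness. The only cosmetic differences are that the paper quotes the uniqueness of generalized pathwise solutions directly from \cite[Theorem~3.1]{GS14} rather than re-running a doubling argument, and it verifies the structural condition $\partial_\xi f=\d(\xi)-\nu$ by an explicit computation of $\partial_\xi f_{\D t}$ along the flow instead of your Young-measure phrasing; neither changes the substance.
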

\begin{proof}
We begin with a brief outline of the proof: Firstly, as in the proof of Theorem \ref{thm:main} we rewrite the scheme in a kinetic formulation with a defect measure $m_{\D t}$. Then we establish uniform in $\D t$ estimates for $f_{\D t}$ and $m_{\D t}$. This allows to extract weakly$\star$- convergent subsequences $f_{\D t}\overset{\star}\rightharpoonup f$, $m_{\D t}\overset{\star}\rightharpoonup m$. We then identify the limit $f$ as a pathwise generalized entropy solution to \eqref{eq:SCL-inhomo}. Since, in view of  Theorem 3.1] in \cite{GS14}, generalized entropy solutions are unique, it follows that $f=\chi$, and this  yields the weak convergence of the approximants $f_{\D t}$. In the last step we deduce strong convergence.

\textbf{\textit{Step 1: The kinetic formulation of the approximation scheme.}} Similarly to the homogeneous setting we observe that  the semi-discretization scheme has the following kinetic representation:
%$f_{\D t}$ solves, informally,
%\begin{align}
\begin{equation}\label{eq:approx_kinetic-1}
\partial_{t}f_{\D t}+\sum_{i=1}^{N}a^{i}(x,\xi)\partial_{x_{i}}f_{\D t}\circ dz^{i}+\sum_{i=1}^{N}\partial_{x_{i}}A^{i}(x,\xi)\partial_{\xi}f_{\D t}\circ dz^{i} =\partial_{\xi}m_{\D t}
%\sum_{k}\d(t-t_{k})(\mcM f_{\D t}-f_{\D t})%\label{eq:approx_kinetic-1}\\
\end{equation}
where
$$\partial_{\xi}m_{\D t}:= \sum_{k}\d(t-t_{k})(\mcM f_{\D t}-f_{\D t}),$$   %\nonumber 
%\end{align}
%and $\mcM$ is defined as in \eqref{eq:mcM} and 
$m_{\D t}$ being a non-negative measure on $\R^N\times\R\times[0,T]$, and  $\mcM$ is defined as in \eqref{eq:mcM}. % as before. 

We pass to the stable form of \eqref{eq:approx_kinetic-1} by convolution along characteristics.  For any $\varrho^{0}\in C_{c}^{\infty}(\R^{N+1})$, $t_{0}\in[0,T]$ and $(y,\eta)\in\R^{N+1}$, we consider 
\begin{equation}
\vr_{t_{0}}(x,y,\xi,\eta,t):=\vr^{0}\left(\begin{array}{cc}
X_{(x,\xi,t)}(t-t_{0})-y\\
\Xi_{(x,\xi,t)}(t-t_{0})-\eta
\end{array}\right).\label{eq:transport_stable-1-1}
\end{equation}
Then, in the sense of distributions in $t\in[0,T]$,
\[
\partial_{t}(f_{\D t}\ast\vr_{t_{0}})(y,\eta,t)=-\int\partial_{\xi}\vr_{t_{0}}(x,y,\xi,\eta,t)m_{\D t}(x,\xi,t)dtdxd\xi,
\]
which is equivalent to%, for all $s<t$, $s,t\in[0,T]$,
\begin{equation}
(f_{\D t}\ast\vr_{t_{0}})(y,\eta,t)-f_{\D t}\ast\vr_{t_{0}}(y,\eta,s)=-\int_{(s,t]}\int\partial_{\xi}\vr_{t_{0}}(x,y,\xi,\eta,t)m_{\D t}(x,\xi,t)dtdxd\xi.\label{eq:integrated_soln}
\end{equation}
for all $s<t$, $s,t\in[0,T].$

\textbf{\textit{Step 2: Stable apriori estimate.}} We establish uniform in $\D t$ estimates for $ $$f_{\D t}$ and $m_{\D t}$. We begin with an $L^1$-estimate.
\begin{lem}
\label{lem:energy-bounds} Let $u_{0}\in(L^{1}\cap L^{2})(\R^{N})$ and assume \eqref{eq:inhomo_assumptions}. Then, for all $t\in[0,T]$,
\begin{align}
\int|f_{\D t}|(x,\xi,t)dxd\xi\le & \|u_{0}\|_{1}.\label{eq:l1_bdd}
\end{align}
and, for some constant $M>0$ independent of $\D t$,
\begin{align}
\frac{1}{2}\int_{0}^{t}\int m_{\D t}(x,\xi,r)d\xi dxdr+\int f_{\D t}(x,\xi,t)\xi dxd\xi\le & \frac{1}{2}\|u_{0}\|_{2}^{2}+M\|u_{0}\|_{1}.\label{eq:energy_bound}
\end{align}
\end{lem}
\begin{proof}
Since $(x,\xi)\mapsto(X_{(x,\xi,t_{1})}(t),\Xi_{(x,\xi,t_{1})})$ is volume-preserving, using \eqref{eq:mcM-contraction} we find, for all $t\in[t_{k},t_{k+1})$,
\begin{align*}
\int|f_{\D t}|(x,\xi,t)dxd\xi & =\int|f_{\D t}|\left(X_{(x,\xi,t)}(t-t_{k}),\Xi_{(x,\xi,t)}(t-t_{k}),t_{k}\right)dxd\xi
  =\int|f_{\D t}|\left(x,\xi,t_{k}\right)dxd\xi\\
 & =\int|\mcM f_{\D t}|\left(x,\xi,t_{k}-\right)dxd\xi
  \le\int|f_{\D t}|\left(x,\xi,t_{k}-\right)dxd\xi,
\end{align*}
which proves \eqref{eq:l1_bdd} by iteration. 

Then \eqref{eq:energy_bound} follows as in Lemma 4.7  \cite[Lemma 4.7]{GS14}.
\end{proof}
Next we show that the approximants $f_{\D t}$ are uniformly tight.
\begin{lem}
\label{lem:tightness}Let $u_{0}\in(L^{1}\cap L^{2})(\R^{N})$  and assume \eqref{eq:inhomo_assumptions}. The family $f_{\D t}$ is uniformly tight, that is, for each $\ve>0$, there is an $R>0$ (independent of $\D t$) such that
\[
\sup_{t\in[0,T]}\int_{B_{R}^{c}\times\R}|f_{\D t}|(x,\xi,t)dxd\xi dt\le\ve.
\]
\end{lem}
\begin{proof}
Choose $\varrho^{s,0}C_{c}^{\infty}(\R^N)$ and $\varrho^{v,0}\in C_{c}^{\infty}(\R)$ and consider  \eqref{eq:transport_stable-1-1} with $\varrho^{0}(x,\xi):=\varrho^{s,0}(x)\varrho^{v,0}(\xi)$; the superscripts $s, v$ refer to the state and velocity variables respectively. 

Then
\begin{align*}
\partial_{\xi}\rho_{t_{0}}(x,0,\xi,0,t) & =\partial_{\xi}(\varrho^{s,0}(X_{(x,\xi,t)}(t-t_{0}))\varrho^{v,0}(\Xi_{(x,\xi,t)}(t-t_{0})))\\[1mm]
 & =(\partial_{\xi}\varrho^{s,0}(X_{(x,\xi,t)}(t-t_{0})))\varrho^{v,0}(\Xi_{(x,\xi,t)}(t-t_{0}))\\[1mm]
 & +\varrho^{s,0}(X_{(x,\xi,t)}(t-t_{0}))\partial_{\xi}(\varrho^{v,0}(\Xi_{(x,\xi,t)}(t-t_{0})))\\[1mm]
 & =D\varrho^{s,0}(X_{(x,\xi,t)}(t-t_{0}))\cdot(\partial_{\xi}X_{(x,\xi,t)}(t-t_{0}))\varrho^{v,0}(\Xi_{(x,\xi,t)}(t-t_{0}))\\[1mm]
 & +\varrho^{s,0}(X_{(x,\xi,t)}(t-t_{0}))D\varrho^{v,0}(\Xi_{(x,\xi,t)}(t-t_{0}))\partial_{\xi}\Xi_{(x,\xi,t)}(t-t_{0}).
\end{align*}
Fix $\ve>0$. It follows from Lemma \ref{lem:hoelder_rp} that we may choose % be arbitrary, fixed. Let 
$\d>0$, $s<t$, $t_{0}\in[s,t]$ and $|t-s|$ so small that%(this is possible due to Lemma \ref{lem:hoelder_rp})
, for all $(x,\xi)\in\R^{N+1}$ and $r\in[s,t]$, 
\begin{align}
\partial_{\xi}\Xi_{(x,\xi,r)}(r-t_{0})  \ge0, \quad
|X_{(x,\xi,r)}(r-t_{0})-x|  \le\frac{1}{4}\label{eq:der-bdd}\quad\text{and}\quad
|\partial_{\xi}X_{(x,\xi,r)}(r-t_{0})|  \le\d. 
\end{align}
%Further let $\varrho^{v,0}$ be non-decreasing. 
Hence, %Using Lemma \ref{lem:hoelder_rp} we obtain
for all $(x,\xi)\in\R^{N+1}$, $r\in[s,t]$,
\begin{align*}
  &-D\varrho^{s,0}(X_{(x,\xi,r)}(r-t_{0}))\cdot(\partial_{\xi}X_{(x,\xi,r)}(r-t_{0}))\varrho^{v,0}(\Xi_{(x,\xi,r)}(r-t_{0}))\\[1mm]
  &\le |D\varrho^{s,0}(X_{(x,\xi,r)}(r-t_{0})||\partial_{\xi}X_{(x,\xi,r)}(r-t_{0})||\varrho^{v,0}(\Xi_{(x,\xi,r)}(r-t_{0}))|\\[1mm]
   & \le\d|D\varrho^{s,0}(X_{(x,\xi,r)}(r-t_{0}))|.
\end{align*}
Next we consider a sequence of $\vr^{v,0}_L$'s such that $\vr_L^{v,0}\to\sgn$ in $L^{\infty}(\R)$ for $L \to\infty$, $\vr_L ^{v,0}$ non-decreasing on $[-1,1]$ and $|D\vr_L ^{v,0}(\xi)|\le 1$ for all $1\le |\xi|$ and $D\vr_L ^{v,0}(\xi)=0$ for all $1 \le |\xi| \le L $. % and using $\sgn(f_{\D t}(x,\xi,t))=\sgn(\xi)$ we obtain
Then
\begin{align*}
  &-\varrho^{s,0}(X_{(x,\xi,r)}(r-t_{0}))D\varrho_L ^{v,0}(\Xi_{(x,\xi,r)}(r-t_{0}))\partial_{\xi}\Xi_{(x,\xi,r)}(r-t_{0})\\
  &\le -\varrho^{s,0}(X_{(x,\xi,r)}(r-t_{0}))D\varrho_L ^{v,0}(\Xi_{(x,\xi,r)}(r-t_{0}))\partial_{\xi}\Xi_{(x,\xi,r)}(r-t_{0})1_{|\Xi_{(x,\xi,r)}(r-t_{0})|\ge 1}.
\end{align*}
Using Lemma \ref{lem:energy-bounds} and dominated convergence, we conclude
\begin{align*}
  -\lim_{L \to \infty} \int_{(s,t]}\int \varrho^{s,0}(X_{(x,\xi,r)}(r-t_{0}))D\varrho_L ^{v,0}(\Xi_{(x,\xi,r)}(r-t_{0}))\partial_{\xi}\Xi_{(x,\xi,r)}(r-t_{0})m_{\D t}(x,\xi,r)dxd\xi dr
  \le 0
\end{align*}
and, hence,
\begin{align*}
    -\lim_{L \to \infty} \int_{(s,t]}\int \partial_{\xi}\rho_{t_{0},L}(x,0,\xi,0,r)m_{\D t}(x,\xi,r)dxd\xi dr
    \le  \int_{(s,t]}\int \d|D\varrho^{s,0}(X_{(x,\xi,r)}(r-t_{0})|m_{\D t}(x,\xi,r)dxd\xi dr.
\end{align*}
%
%\begin{align*}
%-\partial_{\xi}\rho_{t_{0}}(0,x,\xi,0,r) & \le-D\varrho^{s,0}(X_{(x,\xi,r)}(r-t_{0}))(\partial_{\xi}X_{(x,\xi,r)}(r-t_{0}))\varrho^{v,0}(\Xi_{(x,\xi,r)}(r-t_{0}))\\
% & \le|D\varrho^{s,0}(X_{(x,\xi,r)}(r-t_{0})||\partial_{\xi}X_{(x,\xi,r)}(r-t_{0})||\varrho^{v,0}(\Xi_{(x,\xi,r)}(r-t_{0}))|\\
% & \le\d|D\varrho^{s,0}(X_{(x,\xi,r)}(r-t_{0})|.
%\end{align*}
Thus, with $(y,\eta)=(0,0)\in\R^{N+1}$ in \eqref{eq:integrated_soln}, we get
\begin{align*}
 & \lim_{L \to\infty} \int f_{\D t}(x,\xi,t)\varrho_{t_{0},L }(x,0,\xi,0,t)dxd\xi-\lim_{L \to\infty}\int f_{\D t}(x,\xi,s)\varrho_{t_{0},L }(x,0,\xi,0,s)dxd\xi\\
 & =-\lim_{L \to\infty}\int_{(s,t]}\int\partial_{\xi}\varrho_{t_{0},L }(x,0,\xi,0,r)m_{\D t}(x,\xi,r)dxd\xi dr\\
 & \le\d\int_{(s,t]}\int|D\varrho^{s,0}(X_{(x,\xi,r)}(r-t_{0}))|m_{\D t}(x,\xi,r)dxd\xi dr.
\end{align*}
We choose $t_{0}=s$, use that $\vr_L ^{v,0}\to\sgn$ in $L^{\infty}(\R)$ for $L \to\infty$ and $\sgn(f_{\D t}(x,\xi,t))=\sgn(\xi)$ to find
%\begin{align*}
% & \int f_{\D t}(x,\xi,t)\varrho^{s,0}(X_{(x,\xi,t)}(t-s))\vr^{v,0}(\Xi_{(x,\xi,t)}(t-s))dxd\xi-\int f_{\D t}(x,\xi,s)\varrho^{s,0}(x)\vr^{v,0}(x)dxd\xi\\
% & \le\d\int_{(s,t]}\int|D\varrho^{s,0}(X_{(x,\xi,r)}(r-s))|m_{\D t}(x,\xi,r)dxd\xi dr.
%\end{align*}
%Next we consider a sequence of $\vr^{v,0}$ such that $\vr^{v,0}\to\sgn$ in $L^{\infty}(\R)$ and using $\sgn(f_{\D t}(x,\xi,t))=\sgn(\xi)$ we obtain
%\begin{align}
\begin{equation}\label{eq:loc_l1}
\begin{cases}
  \int|f_{\D t}|(x,\xi,t)\varrho^{s,0}(X_{(x,\xi,t)}(t-s))dxd\xi-\int|f_{\D t}|(x,\xi,s)\varrho^{s,0}(x)dxd\xi\\[1mm]
\qquad  \le\d\int_{(s,t]}\int|D\varrho^{s,0}(X_{(x,\xi,r)}(r-s))|m_{\D t}(x,\xi,r)dxd\xi dr. %\nonumber 
%\end{align}
\end{cases}
\end{equation}
Let $R>0$ large enough to be fixed later %, to justify the following calculations 
and choose $\vr^{s,0}:\R^N\to[0,1]$ such that
\[
\vr^{s,0}=\begin{cases}
1 &\quad |x|\ge R-\frac{1}{4},\\[1mm]
0 &\quad |x|<R-\frac{1}{2},
\end{cases}\quad\text{and}\quad |D\vr^{s,0}|\le4. 
\]
%with $\vr^{s,0}\in[0,1]$ 
If follows, using \eqref{eq:der-bdd}, that
\begin{align*}
\varrho^{s,0}(X_{(x,\xi,t)}(t-s)) & =\begin{cases}
1 &\quad |x|\ge R,\\
0 &\quad |x|\le R-1.
\end{cases}
\end{align*}
We employ again Lemma \ref{lem:hoelder_rp} to choose a partition $0=\tau_{0}<\tau_{1}<\dots<\tau_{\td M}=T$ of $[0,T]$ with  $\td M=\td M(\d)$ such that \eqref{eq:der-bdd} is satisfied for all intervals $[s,t]=[\tau_{k},\tau_{k+1}]$. Then, using \eqref{eq:loc_l1}, for all $t\in[0,T]$ and $k\in\{0,\dots,\tilde M\}$ such that $t\in[\tau_{k},\tau_{k+1})$, we find
\begin{align*}
  \int_{B_{R}^{c}}|f_{\D t}|(x,\xi,t)dxd\xi
 & \le\int|f_{\D t}|(x,\xi,t)\varrho^{s,0}(X_{(x,\xi,t)}(t-\tau_{k}))dxd\xi\\
 & \le\int|f_{\D t}|(x,\xi,\tau_{k})\varrho^{s,0}(x)dxd\xi+\d\int_{(\tau_{k},t]}\int|D\varrho^{s,0}(X_{(x,\xi,r)}(r-\tau_{k}))|m_{\D t}dxd\xi dr\\
 & \le\int_{B_{R-1}^{c}}|f_{\D t}|(x,\xi,\tau_{k})dxd\xi+4\d\int_{(\tau_{k},t]}\int m_{\D t}dxd\xi dr,
\end{align*}
which, after an iteration and in view of Lemma \ref{lem:energy-bounds}, yields %(recall that $M$ in Lemma \ref{lem:energy-bounds} is a constant uniform in $\D t$),
\begin{align*}
  \int_{B_{R}^{c}}|f_{\D t}|(x,\xi,t)dxd\xi
 & \le\int_{B_{R-\td M}^{c}}|f_{\D t}|(x,\xi,0)dxd\xi+4\d\int_{[0,t]}\int m_{\D t}dxd\xi dr\\
 & \le\int_{B_{R-\td M}^{c}}|u_{0}|(x)dx+4\d(\frac{1}{2}\|u_{0}\|_{2}^{2}+M\|u_{0}\|_{1}).
\end{align*}
To conclude, we first choose $\d<\frac{\ve}{2\|u_{0}\|_{2}^{2}+2M\|u_{0}\|_{1}}$ and then $R$ large enough.
\end{proof}
\textbf{\textit{Step 3: The weak convergence.}} For all $t_{0}\ge0$, all test functions $\vr_{t_{0}}$ given by \eqref{eq:transport_stable-1-1} with $\vr^{0}\in C_{c}^{\infty}$ and all $\vp\in C_{c}^{\infty}([0,T))$, we have
%\begin{align}
\begin{equation}\label{eq:approx_rough_kinetic}
\begin{cases}
  \int_{0}^{T}\partial_{t}\vp(r)(\vr_{t_{0}}\ast f_{\D t})(y,\eta,r)dr+\vp(0)(\vr_{t_{0}}\ast f_{\D t})(y,\eta,0)\\[1.2mm]
 \qquad  =\int_{0}^{T}\int\vp(r)\partial_{\xi}\vr_{t_{0}}(x,y,\xi,\eta,r)m{}_{\D t}(x,\xi,r)dxd\xi dr,%\%nonumber 
%\end{align}
\end{cases}
\end{equation}
that is, 
%\begin{align}
\begin{equation}\label{eq:approx_rough_kinetic_2}
\begin{cases}
 \int_{0}^{T}\int\partial_{t}\vp(r)\vr_{t_{0}}(x,y,\xi,\eta,r)f_{\D t}(x,\xi,r)dxd\xi dr+\int\vp(0)\vr_{t_{0}}(x,y,\xi,\eta,0)\chi(u^{0}(x),\xi)dxd\xi\\[1.2mm] %label{eq:approx_rough_kinetic_2}\\
\qquad =\int_{0}^{T}\int\vp(r)\partial_{\xi}\vr_{t_{0}}(x,y,\xi,\eta,r)m_{\D t}(x,\xi,r)dxd\xi dr.%\nonumber 
\end{cases}
\end{equation}
%\end{align}
Moreover, once again using Lemma \ref{lem:hoelder_rp} we find that, for some $C>0$ and all $t\in [0,T]$,
\begin{equation}
\sup_{x,\xi}\left\Vert \left(\begin{array}{c}
X_{(x,\xi,t)}(t-\cdot)-x\\
\Xi_{(x,\xi,t)}(t-\cdot)-\xi
\end{array}\right)\right\Vert _{C^{0}([0,T])}\le C.\label{eq:unif_bound}
\end{equation}
Since $\vr^{0}$ has compact support so does $\vr_{t_{0}}$ in view of \eqref{eq:unif_bound}. Moreover, Lemma \ref{lem:energy-bounds} gives
\[
\sup_{t\in[0,T]}\|f_{\D t}(\cdot,\cdot,t)\|_{L^{1}(\R^{N}\times\R)}\le\|u_{0}\|_{1}.
\]
We use next Lemma \ref{lem:tightness} and $|f_{\D t}|\le1$ to find a subsequence (again denoted as $f_{\D t}$) such that, as $\D t\to 0$,
\begin{align*}
f_{\D t}  \overset{*}\rightharpoonup f\text{ in }L^{\infty}(\R^{N}\times\R\times[0,T])\quad\text{and}\quad
f_{\D t}  \rightharpoonup f\text{ in }L^{1}(\R^{N}\times\R\times[0,T]).
\end{align*}
Moreover, Lemma \ref{lem:energy-bounds} yields
\[
\|f\|_{L^{\infty}([0,T];L^{1}(\R^{N}\times\R))}\le\|u_{0}\|_{1}.
\]
Since $\sgn(f_{\D t}(x,\xi,t))=\sgn(\xi)$, the weak$\star$ convergence of the $f_{\D t}$'s implies
\[
f(x,\xi,t)\sgn(\xi)=|f|(x,\xi,t)\le1.
\]
Next, we note that
\begin{align*}
\partial_{\xi}f_{\D t}= & \sum_{k}\partial_{\xi}(\chi(X_{(x,\xi,t)}(t-t_{k}),\Xi_{(x,\xi,t)}(t-t_{k}),t_{k}))1_{[t_{k},t_{k+1})}(t)\\
= & \sum_{k}(\partial_{\xi}\chi)(X_{(x,\xi,t)}(t-t_{k}),\Xi_{(x,\xi,t)}(t-t_{k}),t_{k})\partial_{\xi}\Xi_{(x,\xi,t)}(t-t_{k})1_{[t_{k},t_{k+1})}(t)\\
 & +\sum_{k}(D_{x}\chi)(X_{(x,\xi,t)}(t-t_{k}),\Xi_{(x,\xi,t)}(t-t_{k}),t_{k})\cdot\partial_{\xi}X_{(x,\xi,t)}(t-t_{k})1_{[t_{k},t_{k+1})}(t).
\end{align*}
Moreover, \eqref{eq:flow_sgn} implies that, in the sense of distributions,
\begin{align}
 & (\partial_{\xi}\chi)(X_{(x,\xi,t)}(t-t_{k}),\Xi_{(x,\xi,t)}(t-t_{k}),t_{k})\nonumber \\[1mm]
 & =\d(\Xi_{(x,\xi,t)}(t-t_{k}))-\d\big(\Xi_{(x,\xi,t)}(t-t_{k})-u(X_{(x,\xi,t)}(t-t_{k}),t_{k})\big)\label{eq:chi_der}\\[1mm]
 & =\d(\xi)-\d\big(\Xi_{(x,\xi,t)}(t-t_{k})-u(X_{(x,\xi,t)}(t-t_{k}),t_{k})\big),\nonumber 
\end{align}
where, for $\vp\in C_{c}^{\infty}(\R^{N+1})$, 
\[
\d(\Xi_{(x,\xi,t)}(t-t_{k})-u(X_{(x,\xi,t)}(t-t_{k}),t_{k}))(\vp):=\int\vp(Y_{(x,\xi,t_{k})}(t),\z_{(x,\xi,t_{k})}(t))\d(\xi-u(x,t_{k}))dxd\xi,
\]
and thus
%\begin{align}
\begin{equation}\label{eq:fdt_xi_der}
\begin{cases}
\partial_{\xi}f_{\D t}=  \d(\xi)-\nu_{\D t}(x,\xi,t) +\sum_{k}\d(\xi)(\partial_{\xi}\Xi_{(x,\xi,t)}(t-t_{k})-1)1_{[t_{k},t_{k+1})}(t) \\[1mm]
\qquad +\sum_{k}D_{x}\chi(X_{(x,\xi,t)}(t-t_{k}),\Xi_{(x,\xi,t)}(t-t_{k}),t_{k})\cdot\partial_{\xi}X_{(x,\xi,t)}(t-t_{k})1_{[t_{k},t_{k+1})}(t),%\nonumber 
%\end{align}
\end{cases}
\end{equation}
with
\[
\nu_{\D t}(x,\xi,t):=\sum_{k}\d(\Xi_{(x,\xi,t)}(t-t_{k})-u(X_{(x,\xi,t)}(t-t_{k}),t_{k}))\partial_{\xi}\Xi_{(x,\xi,t)}(t-t_{k})1_{[t_{k},t_{k+1})}(t) .
\]
We use again Lemma \ref{lem:hoelder_rp} to get for $\D t$ small enough and all $t\in [t_{k},t_{k+1})$,
\begin{equation}
\partial_{\xi}\Xi_{(x,\xi,t)}(t-t_{k})\in[0,2],\label{eq:sign}
\end{equation}
which implies that $\nu_{\D t}$ is a non-negative measure. 

Furthermore, for all $R>0$, \eqref{eq:sign} and \eqref{eq:unif_bound} give, for some constants $\td R$, $C>0$ independent of $\D t$,
\begin{align*}
\int_{0}^{T}\int_{B_{R}}\nu_{\D t}dxd\xi dt & =\sum_{k}\int_{t_{k}}^{t_{k+1}}\int\int_{B_{R}}\d(\Xi_{(x,\xi,t)}(t-t_{k})-u(X_{(x,\xi,t)}(t-t_{k}),t_{k}))\partial_{\xi}\Xi_{(x,\xi,t)}(t-t_{k})dxd\xi dt\\
 & =\sum_{k}\int_{t_{k}}^{t_{k+1}}\int\int_{B_{\td R}}\d(\xi-u(x,t_{k}))\partial_{\xi}\Xi_{(x,\xi,t)}(t-t_{k})|_{Y_{(x,\xi,t_{k})}(t),\z_{(x,\xi,t_{k})}(t)}dxd\xi dt\\
 & \le2\sum_{k}\int_{t_{k}}^{t_{k+1}}\int\int_{B_{\td R}}\d(\xi-u(x,t_{k}))dxd\xi dt \le C.
\end{align*}
Hence, there exists a non-negative measure $\nu$ so that, along a subsequence,
\[
\nu_{\D t}\overset{*}\rightharpoonup\nu .
\]
Observe that, for each $\vp\in C_{c}^{\infty}(\R^{N}\times\R\times[0,T])$,
\begin{align*}
 & \sum_{k}\int\vp(x,\xi,t)(D_{x}\chi)(X_{(x,\xi,t)}(t-t_{k}),\Xi_{(x,\xi,t)}(t-t_{k}),t_{k})\cdot\partial_{\xi}X_{(x,\xi,t)}(t-t_{k})1_{[t_{k},t_{k+1})}(t)dxd\xi dt\\
 & =\sum_{k}\int_{t_{i}}^{t_{i+1}}\int(D_{x}\chi)(x,\xi,t_{k})\cdot\vp(Y_{(x,\xi,t_{k})}(t),\z_{(x,\xi,t_{k})}(t),t)\partial_{\xi}X_{(x,\xi,t)}(t-t_{k})|_{Y_{(x,\xi,t_{k})}(t),\z_{(x,\xi,t_{k})}(t)}dxd\xi dt\\
 & =-\sum_{k}\int_{t_{i}}^{t_{i+1}}\int\chi(x,\xi,t_{k})\cdot D_{x}\left(\vp(Y_{(x,\xi,t_{k})}(t),\z_{(x,\xi,t_{k})}(t),t)\partial_{\xi}X_{(x,\xi,t)}(t-t_{k})|_{Y_{(x,\xi,t_{k})}(t),\z_{(x,\xi,t_{k})}(t)}\right)dxd\xi dt,
\end{align*}
and, since Lemma \ref{lem:hoelder_rp} yields that, as $\D t\to 0$,
\[
\sup_{t\in[t_{k},t_{k+1}]}\|\partial_{\xi}X_{(\cdot,\cdot,t)}(t-t_{k})\|_{C^{1}(\R^{N+1})}\to0,
\]
we find
\begin{align*}
 & \sum_{k}\int\vp(x,\xi,t)(D_{x}\chi)(X_{(x,\xi,t)}(t-t_{k}),\Xi_{(x,\xi,t)}(t-t_{k}),t_{k})\cdot\partial_{\xi}X_{(x,\xi,t)}(t-t_{k})1_{[t_{k},t_{k+1})}(t)dxd\xi dt\to0.
\end{align*}
Moreover, again Lemma \ref{lem:hoelder_rp} gives that, for $\D t\to0$,
\[
\|\partial_{\xi}\Xi_{(\cdot,\cdot,t)}(t-t_{k})-1\|_{C(\R^{N+1})}\to0,
\] 
and thus letting $\D t\to0$ in \eqref{eq:fdt_xi_der} we find that, in the sense of distributions,
\begin{align*}
\partial_{\xi}f= & \d(\xi)-\nu.
\end{align*}

Recall that (see Lemma \ref{lem:energy-bounds}), for all $t\in[0,T]$
\begin{align*}
\frac{1}{2}\int_{0}^{t}\int m_{\D t}(x,\xi,r)d\xi dxdr\le & \frac{1}{2}\|u_{0}\|_{2}^{2}+M\|u_{0}\|_{1}.
\end{align*}
It follows that  there exists some nonnegative measure $m$ and a weak$\star$ convergent subsequence such that $m_{\D t}\overset{*}\rightharpoonup m$. 

Taking the limit in \eqref{eq:approx_rough_kinetic_2} then yields
\begin{align*}
 & \int_{0}^{T}\int\partial_{t}\vp(r)\vr_{t_{0}}(x,y,\xi,\eta,r)f(x,\xi,r)dxd\xi dr+\int\vp(0)\vr_{t_{0}}(x,y,\xi,\eta,0)\chi(u^{0}(x),\xi)dxd\xi\\
 & =\int_{0}^{T}\int\vp(r)\partial_{\xi}\vr_{t_{0}}(x,y,\xi,\eta,r)m(x,\xi,r)dxd\xi dr.
\end{align*}
Hence, $f$ is a generalized rough kinetic solution to \eqref{eq:SCL-inhomo}. The uniqueness of generalized rough kinetic solutions (see \cite[Theorem 3.1]{GS14}) yields that $f=\chi$ and thus $f$ is the unique pathwise entropy solution to \eqref{eq:SCL-inhomo}. Hence, the whole sequence $f_{\D t}$ converges to $\chi$ weakly$\star$ in $L^{\infty}(\R^{N}\times\R\times[0,T])$ and weakly in $L^{1}(\R^{N}\times\R\times[0,T])$.

\textbf{\textit{Step 4: The strong convergence.}} We note that, in view of the weak convergence of $f_{\D t}$ to $\chi$ in $L^{1}(\R^{N}\times\R\times[0,T])$, we have, for $\D t\to 0$, 
\begin{align*}
\int_{0}^{T}\int|f_{\D t}-\chi|^{2}dxd\xi dt & =\int_{0}^{T}\int|f_{\D t}|^{2}-2f_{\D t}\chi+|\chi|^{2}dxd\xi dt
  \le\int_{0}^{T}\int|f_{\D t}|-2f_{\D t}\chi+|\chi|dxd\xi dt\\
 & =\int_{0}^{T}\int f_{\D t}\sgn(\xi)-2f_{\D t}\chi+|\chi|dxd\xi dt \to\int_{0}^{T}\int\chi\sgn(\xi)-2\chi\chi+|\chi|dxd\xi dt \\
  &=0.
\end{align*}
The uniform tightness of $f_{\D t}$ then implies $\int_{0}^{T}\int|f_{\D t}-\chi|dxd\xi dt\to0$ and, hence, as $\D t\to0$,
\begin{align*}
\int_{0}^{T}\int|u_{\D t}-u|dxdt & =\int_{0}^{T}\int|\int f_{\D t}d\xi-\int\chi d\xi|dxdt
  \le\int_{0}^{T}\int|f_{\D t}-\chi|d\xi dxdt \to0.
\end{align*}
\end{proof}

\appendix

\section{Definitions and some estimates from the theory of rough paths\label{sec:RP}}

We briefly recall some basic facts of the Lyons' rough paths theory used in this paper. For more details we refer to Lyons and Qian \cite{L02} and Friz and Victoir \cite{FV10}. 

Given $x\in C^{1-\text{var}}([0,T];\R^{N})$, the space of continuous paths of bounded variation, the step $M$ signature $S_{M}(x)_{0,T}$ given by
\[
S_{M}(x)_{0,T}:=\left(1,\int_{0<u<T}dx_{u},\dots,\int_{0<u_{1}<\dots<u_{M}<T}dx_{u_{1}}\otimes\dots\otimes dx_{u_{M}}\right),
\]
takes values in the truncated step-$M$ tensor algebra 
\[
T^{M}(\R^{N})=\R\oplus\R^{N}\oplus(\R^{N}\otimes\R^{N})\oplus\ldots\oplus(\R^{N})^{\otimes M};
\]
in fact, $S_{M}(x)$ takes values in the smaller set $G^{M}(\R^{N})\subset T^{M}(\R^{N})$ given by 
\[
G^{M}(\R^{N}):=\left\{ S_{M}(x)_{0,1}:\ x\in C^{1-\text{var}}([0,1];\R^{N})\right\} .
\]
The Carnot-Caratheodory norm of $G^{M}(\R^{N})$ given by
\[
\|g\|:=\inf\left\{ \ \int_{0}^{1}|d\g|\ :\gamma\in C^{1-\text{var}}([0,1];\R^{N})\text{ and }S_{M}(\g)_{0,1}=g\right\} ,
\]
gives rise to a homogeneous metric on $G^{M}(\R^{N})$. 

Alternatively, for any $g\in T^{M}(\R^{N})$, we may set 
\[
|g|:=|g|_{T^{M}(\R^{N})}:=\max_{k=1\dots M}|\pi_{k}(g)|,
\]
where $\pi_{k}$ is the projection of $g$ onto the $k$-th tensor level, which is an inhomogeneous metric on $G^{M}(\R^{N})$. It turns out that the topologies induced by $\|\cdot\|$ and $|\cdot|$ are equivalent. 

For paths in $T^{M}(\R^{N})$ starting at the fixed point $e:=1+0+\ldots+0$ and $\beta\in(0,1]$, it is possible to define $\beta$-Hölder metrics extending the usual metrics for paths in $\R^{N}$ starting at zero. The homogeneous $\beta$-Hölder metric is denoted by $\ensuremath{d_{\beta-\text{Höl}}}$ and the inhomogeneous one by $\ensuremath{\rho_{\beta-\text{Höl}}}$. A corresponding norm is defined by $\ensuremath{\|\cdot\|_{\beta-\text{Höl}}=d_{\beta-\text{Höl}}(\cdot,0)}$, where $0$ denotes the constant $e$-valued path. 

A geometric $\beta$-Hölder rough path ${x}$ is a path in $T^{\lfloor1/\beta\rfloor}(\R^{N})$ which can be approximated by lifts of smooth paths in the $\ensuremath{d_{\beta-\text{Höl}}}$ metric. It can be shown that rough paths actually take values in $G^{\lfloor1/\beta\rfloor}(\R^{N})$. The space of geometric $\beta$-Hölder rough paths is denoted by $C^{0,\b}([0,T];G^{[\frac{1}{\b}]}(\R^{N}))$. 

We state next a  basic stability estimate for solutions to rough differential equations (RDE) of the form 
\[
dx=V(x)\circ d{z},
\]
where ${z}$ is a geometric $\a$-Hölder rough path. 

It is well known (see, for example, \cite{FV10}) that the RDE above has a flow $\psi^{{z}}$ of solutions. The following is taken from Crisan, Diehl, Friz and Oberhauser \cite[Lemma 13]{CDFO13}.
\begin{lem}
\label{lem:hoelder_rp}Let $\a\in(0,1)$, $\g>\frac{1}{\a}\ge1$, $k\in\N$ and assume that $V\in\Lip^{\g+k}(\R^{N};\R^{N}).$ For all $R>0$ there exist $C=C(R,\|V\|_{\Lip^{\g+k}})$ and $K=K(R,\|V\|_{\Lip^{\g+k}})$, which are non-decreasing in all arguments, such that, for all geometric $\a$-Hölder rough paths ${z}^{1},{z}^{2}\in C^{0,\a}([0,T];G^{[\frac{1}{\a}]}(\R^{N}))$ with $\|{z}^{1}\|_{\a-\Hoel;[0,T]}$, $\|{z}^{2}\|_{\a-\Hoel;[0,T]}\le R$ and all $n\in\{0,\dots,k\}$, 
\begin{align*}
\sup_{x\in\R^{N}}\|D^{n}(\psi^{{z}^{1}}-\psi^{{z}^{2}})(x)\|_{\a-\Hoel;[0,T]} & \le C\rho_{\a-\Hoel;[0,T]}({z}^{1},{z}^{2}),\\
\sup_{x\in\R^{N}}\|D^{n}((\psi^{{z}^{1}})^{-1}-(\psi^{{z}^{2}})^{-1})(x)\|_{\a-\Hoel;[0,T]} & \le C\rho_{\a-\Hoel;[0,T]}({z}^{1},{z}^{2})
\end{align*}
and, for all $n\in\{1,\dots,k\}$, 
\begin{align*}
\sup_{x\in\R^{N}}\|D^{n}\psi^{{z}^{1}}(x)\|_{\a-\Hoel;[0,T]}\le K\ \text{ and}\ \sup_{x\in\R^{N}}\|D^{n}(\psi^{{z}^{1}})^{-1}(x)\|_{\a-\Hoel;[0,T]}\le K.
\end{align*}

\end{lem}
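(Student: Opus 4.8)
The plan is to reduce everything to a~priori and stability estimates for rough differential equations, applied not to the flow $\psi^{z}$ alone but to the full jet of its space-derivatives $(\psi^{z},D\psi^{z},\dots,D^{k}\psi^{z})$, which itself solves an augmented RDE driven by the \emph{same} geometric $\a$-H\"older rough path. First I would recall that, under $V\in\Lip^{\g+k}$ with $\g>1/\a$, Lyons' universal limit theorem gives a unique solution flow $\psi^{z}$ that is $C^{k}$ in the space variable, and that differentiating $dx=V(x)\circ dz$ in the initial datum produces, for the Jacobian $J^{(1)}_{t}:=D_{x}\psi^{z}_{t}(x)$, the linear RDE $dJ^{(1)}=DV(\psi^{z}_{t}(x))\,J^{(1)}\circ dz$ with $J^{(1)}_{0}=\mathrm{Id}$. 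Iterating, $J^{(n)}_{t}:=D^{n}_{x}\psi^{z}_{t}(x)$ satisfies an RDE whose driving field is affine in $J^{(n)}$, with coefficients built, via the Fa\`a di Bruno formula, out of $DV,\dots,D^{n}V$ evaluated along $\psi^{z}$ and out of the lower-order jets $J^{(1)},\dots,J^{(n-1)}$. The role of the hypothesis $V\in\Lip^{\g+k}$ is exactly that these coefficient fields remain $\Lip^{\g}$, so the augmented system $(\psi^{z},J^{(1)},\dots,J^{(k)})$ is a genuine RDE to which the standard theory applies.

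Next I would establish the uniform bounds that produce $K$. Since $V\in\Lip^{\g+k}$ has globally bounded derivatives up to order $k$, the coefficients $D^{j}V\circ\psi^{z}$ are globally bounded, and the tower is affine in the highest jet at each level. I would invoke the a~priori growth estimates for (affine) RDEs in \cite{FV10}, which control the $\a$-H\"older norm of the solution on $[0,T]$ solely through $\|z\|_{\a-\Hoel;[0,T]}\le R$ and the $\Lip^{\g}$-norms of the fields. Solving the tower level by level -- first bounding $\psi^{z}$, then $J^{(1)}$, then $J^{(2)}$ using the bound already obtained for $J^{(1)}$, and so on -- yields, uniformly in $x\in\R^{N}$ and $n\in\{1,\dots,k\}$, a bound $\|D^{n}\psi^{z}(x)\|_{\a-\Hoel;[0,T]}\le K(R,\|V\|_{\Lip^{\g+k}})$ that is non-decreasing in its arguments. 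This is the third display.

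For the stability estimates (the first two displays) I would run the two augmented systems driven by $z^{1}$ and $z^{2}$ and compare them using the local Lipschitz continuity of the It\^o--Lyons solution map in the inhomogeneous metric $\rho_{\a-\Hoel}$. The subtlety, and the main obstacle, is that the augmented fields are only affine, not bounded, in the jet variables, so the plain universal-limit-theorem continuity (stated for bounded $\Lip^{\g}$ fields) does not apply verbatim; one must either use the dedicated stability theory for affine RDEs or localise by confining the jet to a ball via the a~priori bounds of the previous step and truncating the fields there. Granting this, the comparison gives $\sup_{x}\|D^{n}(\psi^{z^{1}}-\psi^{z^{2}})(x)\|_{\a-\Hoel}\le C\,\rho_{\a-\Hoel}(z^{1},z^{2})$ with $C$ depending only on $R$ and $\|V\|_{\Lip^{\g+k}}$, monotonically; passing from $\psi$ to the augmented jet is precisely what delivers all orders $n\le k$ at once.

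Finally, the statements for the inverse flow $(\psi^{z})^{-1}$ follow by noting that $(\psi^{z})^{-1}$ is itself the flow of an RDE -- driven either by $-V$ or, equivalently, by the time-reversed rough path -- whose field is again $\Lip^{\g+k}$ with the same norm; applying the two preceding steps to this equation yields the remaining bounds. I expect the affine, unbounded nature of the jet equations in the stability step to be the only genuinely delicate point; the rest is bookkeeping with Fa\`a di Bruno together with a Gronwall-type iteration through the tower.
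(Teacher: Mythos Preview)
The paper does not prove this lemma at all: it is quoted verbatim from Crisan, Diehl, Friz and Oberhauser \cite[Lemma 13]{CDFO13} and used as a black box. So there is no ``paper's own proof'' to compare against.

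That said, your outline is the standard route and is essentially how the result is obtained in the rough-paths literature (and in \cite{CDFO13} itself, which in turn leans on \cite{FV10}): lift to the augmented RDE for the jet $(\psi,D\psi,\dots,D^{k}\psi)$, use the a~priori bounds for linear/affine RDEs level by level to get $K$, then invoke local Lipschitz continuity of the It\^o--Lyons map in $\rho_{\a-\Hoel}$ on the augmented system to get $C$, and finally handle the inverse flow by time reversal. You have correctly flagged the one genuine technical point, namely that the jet equations are affine rather than bounded in the higher variables, so either the dedicated linear-RDE estimates of \cite{FV10} or a localisation via the already-established $K$-bound is needed before the stability theorem applies. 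With that caveat acknowledged, your sketch is sound; for the purposes of this paper, however, a citation suffices.
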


\section{Pathwise entropy solutions to stochastic scalar conservation laws\label{sec:recall_e_soln}}

Assume \eqref{eq:homo_assumptions} and consider the spatially homogeneous problem
\begin{equation}%\left\{\begin{aligned}
\begin{cases}
du+\sum_{i=1}^{N}\partial_{x_{i}}A^{i}(u)\circ dz^{i} & =0\quad\text{in }\R^{N}\times(0,T),\label{eq:SCL-app}\\[1mm]
u(\cdot, 0)  =u_{0}\in(L^{1}\cap L^{\infty})(\R^{N}) .
%\end{aligned}\right.
\end{cases}
\end{equation}
The following notion of pathwise entropy solutions to \eqref{eq:SCL-app} and its well-posedness were introduced in \cite{LPS13}.
\begin{defn}
A function $u\in(L^{1}\cap L^{\infty})(\R^{N}\times[0,T])$ is a pathwise entropy solution to \eqref{eq:SCL-app}, if there exists a nonnegative, bounded measure $m$ on $\R^{N}\times\R\times[0,T]$ such that, for all $\vr^{0}\in C_{c}^{\infty}(\R^{N+1})$, all $\vr$ given by
\[
\vr(x,y,\xi,\eta,t):=\vr^{0}(y-x+a(\xi)z(t),\xi-\eta),
\]
and all $\vp\in C_{c}^{\infty}([0,T))$,
\begin{align*}
 & \int_{0}^{T}\partial_{t}\vp(r)(\vr\ast\chi)(y,\eta,r)dr+\vp(0)(\vr\ast\chi)(y,\eta,0)
  =\int_{0}^{T}\int\vp(r)\partial_{\xi}\vr(x,y,\xi,\eta,r)m(x,\xi,r)dxd\xi dr,
\end{align*}
where the convolution along characteristics $\vr\ast\chi$ is defined by
\[
\vr\ast\chi(y,\eta,r):=\int\vr(x,y,\xi,\eta,r)\chi(x,\xi,r)dxd\xi.
\]
\end{defn}

The following is proved in \cite{LPS13}.
\begin{thm}
Let $u_{0}\in(L^{1}\cap L^{\infty})(\R^{N})$ and assume \eqref{eq:homo_assumptions}. Then there exists a unique pathwise entropy solution $u\in C([0,T];L^{1}(\R^{N}))$ satisfying, for all $p\in[1,\infty]$,
\[
\sup_{t\in[0,T]}\|u(t)\|_{p}\le\|u_{0}\|_{p},
\]
and
\[
\int_{0}^{T}\int_{[-\|u_{0}\|_{\infty},\|u_{0}\|_{\infty}]^{c}}\int_{\R^{N}}m(x,\xi,t)dxd\xi dt=0,\quad \int_{0}^{T}\int_{\R^{N+1}}m(x,\xi,t)dxd\xi dt\le\frac{1}{2}\|u_{0}\|_{2}^{2}.
\]
\end{thm}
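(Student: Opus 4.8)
The plan is to reduce to the classical theory by smooth approximation of the driving signal and then to pass to the limit using the stability of pathwise entropy solutions under perturbation of the signal. First I would choose $z^{n}\in C^{1}([0,T];\R^{N})$ with $z^{n}\to z$ in $C([0,T];\R^{N})$. For each $n$ the equation \eqref{eq:SCL-app} with $z$ replaced by $z^{n}$ is a classical time-inhomogeneous multidimensional scalar conservation law, so the kinetic theory of Lions--Perthame--Tadmor (equivalently, Kru{\v{z}}kov's theory) furnishes a unique entropy solution $u^{n}\in C([0,T];L^{1}(\R^{N}))\cap L^{\infty}$ together with a nonnegative defect measure $m^{n}$ for which $\chi(u^{n},\xi)$ solves \eqref{eq:kinetic} with $\dot z^{n}$ in place of $dz$. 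The a priori bounds are then standard and uniform in $n$: the maximum principle gives $\sup_{t}\|u^{n}(t)\|_{p}\le\|u_{0}\|_{p}$ for all $p\in[1,\infty]$; since $\chi(u^{n},\xi)=0$ for $|\xi|>\|u_{0}\|_{\infty}\ge\|u^{n}\|_{\infty}$, the kinetic equation forces $m^{n}$ to be supported in $\R^{N}\times[-\|u_{0}\|_{\infty},\|u_{0}\|_{\infty}]\times[0,T]$; and testing the kinetic equation against $\xi$ and integrating in $x$ (the transport term drops because $a(\xi)$ is $x$-independent and $\int\nabla_{x}\chi\,dx=0$) yields the entropy identity $\tfrac12\|u^{n}(t)\|_{2}^{2}+\int_{0}^{t}\!\!\int m^{n}=\tfrac12\|u_{0}\|_{2}^{2}$, hence $\int_{0}^{T}\!\!\int m^{n}\le\tfrac12\|u_{0}\|_{2}^{2}$.

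For the passage to the limit the crucial input is continuous dependence on the signal: the $L^{1}$-contraction estimate \cite[Theorem~3.2]{LPS13} (already invoked in the proof of Theorem~\ref{thm:main}) gives $\sup_{t}\|u^{n}(t)-u^{n'}(t)\|_{1}\le C\|z^{n}-z^{n'}\|_{C([0,T];\R^{N})}$ with $C$ depending only on $A$ and $\|u_{0}\|_{L^{1}\cap L^{\infty}}$, so $(u^{n})$ is Cauchy in $C([0,T];L^{1}(\R^{N}))$; let $u\in C([0,T];L^{1}(\R^{N}))$ be its limit. All of the $L^{p}$ bounds pass to the limit by lower semicontinuity. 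Along a subsequence $m^{n}\overset{*}\rightharpoonup m$ for some nonnegative measure $m$, and both displayed bounds on $m$ survive in the limit (the mass bound by lower semicontinuity, the support property because the $m^{n}$ already vanish outside a fixed velocity slab). Finally one passes to the limit in the convolution-along-characteristics weak formulation of the definition above: the characteristics $x\mapsto x-a(\xi)z^{n}(t)$ converge uniformly to $x\mapsto x-a(\xi)z(t)$, the corresponding test functions $\vr$ converge, and $\chi(u^{n},\cdot)\to\chi(u,\cdot)$ in $L^{1}$, which identifies $u$ as a pathwise entropy solution with defect measure $m$.

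Uniqueness is the $L^{1}$-contraction for pathwise entropy solutions themselves, which I would prove (as in \cite{LPS13}) by the doubling-of-variables method executed on the kinetic formulation. One writes the equation for $\chi(u^{1})$ at $(x,\xi,t)$ and the one for $\chi(u^{2})$ at $(y,\eta,t)$, mollifies each by convolution along its own characteristic flow so that the rough transport terms become exact cancellations, then multiplies and integrates; the two defect measures contribute with the favourable sign, and the remaining error is a commutator between the two characteristic flows, which is controlled because $a=A'\in C^{1}$. Sending the mollification parameters to zero gives $\|u^{1}(t)-u^{2}(t)\|_{1}\le\|u_{0}^{1}-u_{0}^{2}\|_{1}$, whence uniqueness, and simultaneously the estimate \cite[Theorem~3.2]{LPS13} used above by comparing solutions driven by two different signals.

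The main obstacle is this uniqueness/stability step: for a merely continuous -- indeed only $\alpha$-H\"older rough -- signal $z$ the flux term has no pointwise meaning, so the doubling argument cannot be run naively and must instead be routed entirely through the characteristic convolutions, the whole point being that the transport is resolved exactly while only the sign-definite entropy defect is treated distributionally; showing that the commutator terms disappear in the mollification limit is the delicate part. By contrast, the smooth approximation and the a priori estimates of the first two steps are routine once the classical theory for the smoothed equations is in hand.
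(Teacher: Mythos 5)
This theorem is not proved in the paper at all: it is recalled verbatim from \cite{LPS13} ("The following is proved in \cite{LPS13}"), so there is no internal proof to compare against; your sketch is an attempt to reconstruct the argument of the cited reference. Your strategy --- classical kinetic/Kru\v{z}kov theory for smooth signals $z^{n}$, uniform $L^{p}$ and entropy-dissipation bounds, passage to the limit in the convolution-along-characteristics formulation, and uniqueness via the kinetic argument with test functions transported by the characteristics --- is indeed the strategy of \cite{LPS13}, and the a priori estimates you give (maximum principle, support of $m^{n}$ in the slab $|\xi|\le\|u_{0}\|_{\infty}$, the identity $\tfrac12\|u^{n}(t)\|_{2}^{2}+\int_{0}^{t}\!\int m^{n}=\tfrac12\|u_{0}\|_{2}^{2}$) are correct and uniform in $n$. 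The one point to repair is the stability-in-the-signal estimate you use to make $(u^{n})$ Cauchy: the quantitative bound $\sup_{t}\|u^{1}(t)-u^{2}(t)\|_{1}\le C\|z^{1}-z^{2}\|_{C([0,T];\R^{N})}$ is available with $C$ depending on the \emph{BV} norm of the data (this is how the present paper invokes it, for $u_{0}\in BV\cap L^{\infty}$), not merely on $\|u_{0}\|_{L^{1}\cap L^{\infty}}$; for general $u_{0}\in(L^{1}\cap L^{\infty})(\R^{N})$ you must combine it with the $L^{1}$-contraction in the initial data (valid for each fixed smooth $z^{n}$ by Kru\v{z}kov) and density of $BV\cap L^{\infty}$ in $L^{1}$, a standard triangle-inequality argument that closes the gap but should be stated.
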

The notion of pathwise entropy solutions was extended in \cite{LPS14} and \cite{GS14} to inhomogeneous stochastic scalar conservation laws of the type
\begin{equation}\label{eq:SCL-inhomo-app}
%\left\{\begin{aligned}
\begin{cases}
\partial_{t}u+\sum_{i=1}^{N}\partial_{x_{i}}A^{i}(x,u)\circ dz^{i}  =0\quad\text{in }\R^{N}\times(0,T),\\[1mm] % \label{eq:SCL-inhomo-app}\\
u(\cdot, 0)  =u_{0}\in(L^{1}\cap L^{2})(\R^{N}). 
%\end{aligned}\right.
\end{cases}
\end{equation}
Assume that $A,z$ satisfy \eqref{eq:inhomo_assumptions}. 
For each $t_{1}\ge0$ and for $i=1,\dots,N,$ consider the backward characteristics 
\begin{equation*}\left\{\begin{aligned}
d{X}_{(x,\xi,t_{1})}^{i}(t) & =a^{i}(X_{(x,\xi,t_{1})}(t),\Xi_{(x,\xi,t_{1})}(t))\circ d{z}^{t_{1},i}(t),\\
d{\Xi}_{(x,\xi,t_{1})}(t) & =-\sum_{i=1}^{N}(\partial_{x_{i}}A^{i})(X_{(x,\xi,t_{1})}(t),\Xi_{(x,\xi,t_{1})}(t))\circ d{z}^{t_{1},i}(t),\\
X_{(x,\xi,t_{1})}^{i}(0) & =x^{i}\ \text{and}\ \Xi_{(x,\xi,t_{1})}(0)=\xi ,
\end{aligned}\right.\end{equation*}
where, for $t\in[0,t_{1}]$, ${z}^{t_{1}}$ is the time-reversed rough path defined in \eqref{eq:reversed_rp}.
%, that is 
%\[
%{z}^{t_{1}}(t):={z}(t_{1}-t).
%\]

Let $\vr_{t_{0}}$ be a test-function transported along the characteristics, that is, for some $\varrho^{0}\in C_{c}^{\infty}(\R^{N+1})$, $t_{0}\in[0,T]$, $(y,\eta)\in\R^{N+1}$,
\begin{equation}
\vr_{t_{0}}(x,y,\xi,\eta,t):=\vr^{0}\left(\begin{array}{cc}
X_{(x,\xi,t)}(t-t_{0})-y\\
\Xi_{(x,\xi,t)}(t-t_{0})-\eta
\end{array}\right).\label{eq:transport_stable-1}
\end{equation}
The following definition is Definition 2.1 and Definition 4.2 of \cite{GS14}.
\begin{defn}
\label{def:path_e-soln}Let $u_{0}\in(L^{1}\cap L^{2})(\R^{N})$. 
%\begin{enumerate}
(i). A function $u\in L^{\infty}([0,T];L^{1}(\R^{N}))$ is a pathwise entropy solution to \eqref{eq:SCL-inhomo-app}, if there exists a nonnegative bounded measure $m$ on $\R^{N}\times\R\times[0,T]$ such that, for all $t_{0}\ge0$, all test functions $\vr_{t_{0}}$ given by \eqref{eq:transport_stable-1} with $\vr^{0}\in C_{c}^{\infty}$ and $\vp\in C_{c}^{\infty}([0,T)),$ 
\begin{equation}\label{eq:soln_prop}
\begin{cases}
%\begin{align}
  \int_{0}^{T}\partial_{t}\vp(r)(\vr_{t_{0}}\ast\chi)(y,\eta,r)dr+\vp(0)(\vr_{t_{0}}\ast\chi)(y,\eta,0)\\[1mm]
 \qquad  =\int_{0}^{T}\int\vp(r)\partial_{\xi}\vr_{t_{0}}(x,y,\xi,\eta,r)m(x,\xi,r)dxd\xi dr.%\nonumber 
%\end{align}
\end{cases}
\end{equation}
(ii). A function $f\in L^{\infty}([0,T];L^{1}(\R^{N}\times\R))$ is a generalized pathwise entropy solution to \eqref{eq:SCL-inhomo-app}, if there exists a nonnegative measure $\nu$ and a nonnegative, bounded measure $m$ on $\R^{N}\times\R\times[0,T]$ such that 
\begin{equation}
f(x,\xi,0)=\chi(u_{0}(x),\xi),\ |f|(x,\xi,t)=\sgn(\xi)f(x,\xi,t)\le1\ \text{ and }\ \frac{\partial f}{\partial\xi}=\d(\xi)-\nu(x,\xi,t),\label{eq:gen_kinetic_measure}
\end{equation}
and \eqref{eq:soln_prop} holds with $f$ replacing $\chi$, for all $t_{0}\ge0,$ $\vr_{t_{0}}$ as in \eqref{eq:transport_stable-1} and $\vp\in C_{c}^{\infty}([0,T))$.
%\end{enumerate}
\end{defn}
The following well-posedness results was proved in \cite{GS14}
\begin{thm}
Let $u_{0}\in(L^{1}\cap L^{2})(\R^{N})$ and assume \eqref{eq:inhomo_assumptions}. Then there exists a unique pathwise entropy solution to \eqref{eq:SCL-inhomo-app} and generalized pathwise entropy solutions to \eqref{eq:SCL-inhomo-app} are unique.
\end{thm}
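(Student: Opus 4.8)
\emph{Overall strategy.} The theorem makes three assertions, but they collapse to a single contraction estimate. Every pathwise entropy solution $u$ is in particular a generalized pathwise entropy solution, with $f=\chi(u,\cdot)$ and $\nu=\delta(\xi-u(x,t))$; hence uniqueness of generalized solutions at once yields uniqueness of pathwise entropy solutions. My plan is therefore: first prove an $L^1$-contraction between any two generalized pathwise entropy solutions $f,g$, with defect measures $m,\tilde m$ and velocity measures $\nu,\tilde\nu$, forcing $f=g$ whenever the initial data agree; and then construct a solution by approximating the rough path $z$ by smooth lifts and identifying the weak limit by means of that contraction.

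\emph{The contraction functional and the cancellation of transport.} I would track $t\mapsto\int(|f|+|g|-2fg)\,dxd\xi$. The sign constraint $\sgn(\xi)f=|f|\le1$ (and the same for $g$) gives the pointwise bound $|f-g|\le|f|+|g|-2fg$, so controlling this functional controls $\int|f-g|\,dxd\xi$. The decisive structural fact is that both $f$ and $g$ are carried by the \emph{same} characteristic flow $(X,\Xi)$, which by \eqref{eq:inhomo_assumptions} is volume preserving and conserves the sign of $\xi$. I would double the variables and test the stable formulation \eqref{eq:soln_prop} for $f$ in $(x,\xi)$ and for $g$ in $(x',\xi')$ against a single test function of the form $\vr^{0}\big((X,\Xi)_{t}(x,\xi)-(X,\Xi)_{t}(x',\xi')\big)$, mollifying the diagonal at scale $\ve$. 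Because both copies ride the same flow, the rough transport terms cancel exactly; this is precisely why the definition transports its test functions along characteristics, and it is what renders the formulation stable for merely continuous or rough $z$. Only the pathwise defect-measure terms survive.

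\emph{The sign computation and the main obstacle.} At the formal level (driver in $C^{1}$), transporting along the volume-preserving flow removes the transport terms and, since $\sgn(\xi)$ is conserved along the flow, one is left with
\[
\tfrac{d}{dt}\!\int fg=-\!\int m(x,0,t)dx-\!\int\tilde m(x,0,t)dx+\!\int m\,d\tilde\nu+\!\int\tilde m\,d\nu,\qquad \tfrac{d}{dt}\!\int|f|=-2\!\int m(x,0,t)dx,
\]
and the analogue for $|g|$, where we used $\partial_\xi f=\delta(\xi)-\nu$, $\partial_\xi g=\delta(\xi)-\tilde\nu$ and $\partial_\xi\sgn=2\delta$. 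Combining, the contributions at $\xi=0$ cancel and
\[
\tfrac{d}{dt}\!\int\big(|f|+|g|-2fg\big)=-2\!\int m\,d\tilde\nu-2\!\int\tilde m\,d\nu\le0
\]
by nonnegativity of all four measures; this is Perthame's kinetic uniqueness argument, in the same spirit as Step 4 and Lemma \ref{lem:err1}. Since $f$ and $g$ share the datum $\chi(u_0)$ the functional vanishes at $t=0$, and being nonnegative and nonincreasing it is identically zero, forcing $f=g$. The hard part is to make this rigorous for a genuine rough path, where neither $\tfrac{d}{dt}$ nor the rough integral is literally available: keeping the variables doubled and testing against one flow-transported test function removes the transport without any rough integration, so only the pathwise measures remain. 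The error from mollifying the diagonal at scale $\ve$ yields commutator terms pairing a derivative of $\vr^0$ with flow differences at nearby points; these are controlled by Lemma \ref{lem:hoelder_rp} (which requires $a,b\in\Lip^{\g+2}$, $\g>1/\a$) and vanish as $\ve\to0$, while the sign structure passes to the limit.

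\emph{Existence.} Choose smooth geometric lifts $z^{n}\to z$ in the $\a$-H\"older rough-path metric. For each $C^1$ driver the equation is a classical time-dependent inhomogeneous scalar conservation law admitting a kinetic solution $\chi^{n}$ with defect measure $m^{n}$; the $L^1$ bound $\|\cdot\|_{1}\le\|u_0\|_1$, the energy/defect estimate $\int_0^T\!\!\int m^{n}\le\tfrac12\|u_0\|_2^2+C\|u_0\|_1$ as in Lemma \ref{lem:energy-bounds}, and tightness are uniform in $n$ thanks to $b(x,0)=0$. Extracting weak-$\star$ limits $\chi^{n}\rightharpoonup f$ and $m^{n}\rightharpoonup m$ and passing to the limit in \eqref{eq:soln_prop} — using that the transported test functions converge in $C^1$ by Lemma \ref{lem:hoelder_rp} — yields a generalized pathwise entropy solution $f$ with $\partial_\xi f=\delta(\xi)-\nu$, by the same computation as in Step 3 of Theorem \ref{thm:x-dep}. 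By the contraction proved above, $f=\chi(u,\cdot)$ for a unique $u$, which is the desired pathwise entropy solution.
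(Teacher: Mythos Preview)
The paper does not actually prove this theorem: it appears in Appendix~\ref{sec:recall_e_soln} as a quoted result, with the proof attributed to \cite{GS14}. Your sketch is essentially the strategy carried out in that reference --- Perthame's kinetic contraction for the functional $\int(|f|+|g|-2fg)$, made rigorous for rough drivers by testing against functions transported along the common characteristic flow so that the rough transport cancels without any rough integration, followed by existence via smooth approximation of $z$, uniform energy/tightness bounds, and identification of the weak limit as a generalized solution. At the level of a sketch it is correct.

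One step you leave implicit is worth making explicit. In your existence paragraph, the weak limit $f$ of the $\chi^{n}$ is a priori only a \emph{generalized} solution; the contraction estimate alone (uniqueness among generalized solutions) does not immediately say $f=\chi(u,\cdot)$ for some $u$. The missing half-line is to apply your contraction with $g=f$: the functional $2\int|f|(1-|f|)$ is nonnegative, nonincreasing, and vanishes at $t=0$ since $f(\cdot,0)=\chi(u_0,\cdot)$, hence $|f|\in\{0,1\}$ a.e.; combined with the structural constraints $\sgn(\xi)f=|f|$ and $\partial_\xi f=\delta(\xi)-\nu\le\delta(\xi)$ from \eqref{eq:gen_kinetic_measure}, this forces $f=\chi(u,\cdot)$. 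With that line added, your outline matches the argument in \cite{GS14}.
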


\section{Indicator functions of BV functions\label{sec:indicator}}

We present here an observation which connects the $BV$-norms of $u$ and $\chi(u(\cdot),\xi)$.

\begin{lem}
\label{lem:BV-indicator}Let $u\in L_{loc}^{1}(\R^{N})$. Then
\begin{align*}
\|u\|_{BV} & =\int_{\R}\|\chi(u(\cdot),\xi)\|_{BV}d\xi.
\end{align*}
\end{lem}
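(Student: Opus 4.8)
The plan is to prove the identity $\|u\|_{BV} = \int_\R \|\chi(u(\cdot),\xi)\|_{BV}\,d\xi$ via the layer-cake (coarea) representation of $u$ and the additivity of total variation across disjoint ``level sets'' of $\xi$. First I would recall that for $u \in L^1_{loc}(\R^N)$ with $\|u\|_{BV}<\infty$ one has the decomposition, valid pointwise in $x$,
\[
u(x) = \int_0^\infty \mathbbm{1}_{\{u>\xi\}}(x)\,d\xi - \int_{-\infty}^0 \mathbbm{1}_{\{u<\xi\}}(x)\,d\xi,
\]
and that, by the very definition \eqref{eq:char_fctn} of the Maxwellian, $\chi(u(x),\xi) = \mathbbm{1}_{\{u(x)>\xi\}}$ for $\xi>0$ and $\chi(u(x),\xi) = -\mathbbm{1}_{\{u(x)<\xi\}}$ for $\xi<0$. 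In particular $\|\chi(u(\cdot),\xi)\|_{BV} = \operatorname{Per}(\{u>\xi\})$ for a.e.\ $\xi>0$ and $=\operatorname{Per}(\{u<\xi\})$ for a.e.\ $\xi<0$, so the right-hand side of the claimed identity is exactly $\int_\R \operatorname{Per}(E_\xi)\,d\xi$ where $E_\xi$ is the corresponding superlevel (resp.\ sublevel) set.

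Next I would invoke the coarea formula for $BV$ functions, which states precisely that
\[
\|u\|_{BV} = |Du|(\R^N) = \int_{-\infty}^\infty \operatorname{Per}(\{u>\xi\})\,d\xi = \int_{-\infty}^\infty \|\chi(u(\cdot),\xi)\|_{BV}\,d\xi,
\]
using that $\operatorname{Per}(\{u>\xi\}) = \operatorname{Per}(\{u<\xi\})$ up to the null set where $\{u=\xi\}$ has positive measure (equivalently, replacing $\{u>\xi\}$ by its complement does not change the perimeter). For $\xi<0$ one writes $\{u>\xi\}^c = \{u\le\xi\}$ and notes $\operatorname{Per}(\{u\le\xi\}) = \operatorname{Per}(\{u<\xi\})$ for a.e.\ $\xi$; this matches the sign convention of $\chi$ in the negative range and shows the integrand coincides with $\|\chi(u(\cdot),\xi)\|_{BV}$ on all of $\R$. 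The finiteness direction is immediate from coarea; the converse (if the right side is finite then $u\in BV$) also follows from coarea, since finiteness of $\int \operatorname{Per}(\{u>\xi\})\,d\xi$ forces $|Du|(\R^N)<\infty$.

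An equivalent, more self-contained route avoiding a black-box citation of coarea: test against $\varphi \in C^1_c(\R^N;\R^N)$ with $\|\varphi\|_\infty\le 1$, write $\int u\,\operatorname{div}\varphi\,dx = \int_\R \big(\int \chi(u(x),\xi)\,\operatorname{div}\varphi(x)\,dx\big)d\xi$ by Fubini and the layer-cake identity, bound the inner integral by $\|\chi(u(\cdot),\xi)\|_{BV}$, and take the supremum over $\varphi$ to get ``$\le$''; for ``$\ge$'' one chooses near-optimal vector fields $\varphi_\xi$ for each level set and glues them (with a measurable selection / approximation argument) to recover the full variation. The main obstacle, and the only place requiring care, is precisely this gluing / measurable-selection step in the lower bound and the handling of the $\xi$-null set where $|\{u=\xi\}|>0$ (so that superlevel and sublevel sets may differ) — both are standard but must be stated cleanly; invoking the $BV$ coarea formula directly sidesteps the gluing entirely and is the shortest honest path.
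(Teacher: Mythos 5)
Your proposal is correct and follows essentially the same route as the paper: the key step in both is the Fleming--Rishel coarea formula for $BV$ functions, combined with the observation that $\chi(u(\cdot),\xi)$ coincides (up to a $\xi$-null set) with indicators of super/sub-level sets. The only cosmetic difference is bookkeeping — you handle $\xi<0$ via perimeters of complements of superlevel sets, while the paper splits $u=u^{+}-u^{-}$ before applying coarea — so no further comparison is needed.
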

\begin{proof}
It follows from Theorem 1 in Fleming and Rishel \cite{FR60} that, for any $u\in L_{loc}^{1}$,
\[
\|u\|_{BV}=\int_{\R}\|1_{(-\infty,u(\cdot))}(\xi)\|_{BV}d\xi.
\]
Hence, 
\begin{align*}
\|u\|_{BV} & =\|u^{+}\|_{BV}+\|u^{-}\|_{BV}\\
 & =\int_{\R}\|1_{(-\infty,u^{+}(\cdot))}(\xi)\|_{BV}d\xi+\int_{\R}\|1_{(-\infty,u^{-}(\cdot))}(\xi)\|_{BV}d\xi\\
 & =\int_{0}^{\infty}\|1_{(-\infty,u^{+}(\cdot))}(\xi)\|_{BV}d\xi+\int_{0}^{\infty}\|1_{(-\infty,u^{-}(\cdot))}(\xi)\|_{BV}d\xi\\
 & =\int_{0}^{\infty}\|1_{(0,u^{+}(\cdot))}(\xi)\|_{BV}d\xi+\int_{0}^{\infty}\|1_{(0,u^{-}(\cdot))}(\xi)\|_{BV}d\xi\\
 & =\int_{0}^{\infty}\|\chi(u^{+}(\cdot),\xi)\|_{BV}d\xi+\int_{0}^{\infty}\|\chi(u^{-}(\cdot),\xi)\|_{BV}d\xi.
\end{align*}
Since, for $\xi\ge0$, $\chi(u,\xi)=1_{(0,u)}(\xi)=1_{(0,u^{+})}(\xi)=\chi(u^{+},\xi)$ and $\chi(u,\xi)=-\chi(-u,-\xi)$ we get
\begin{align*}
\int_{\R}\|\chi(u(\cdot),\xi)\|_{BV}d\xi & =\int_{0}^{\infty}\|\chi(u(\cdot),\xi)\|_{BV}d\xi+\int_{-\infty}^{0}\|\chi(u(\cdot),\xi)\|_{BV}d\xi\\
 & =\int_{0}^{\infty}\|\chi(u^{+}(\cdot),\xi)\|_{BV}d\xi+\int_{-\infty}^{0}\|-\chi(-u(\cdot),-\xi)\|_{BV}d\xi\\
 & =\int_{0}^{\infty}\|\chi(u^{+}(\cdot),\xi)\|_{BV}d\xi+\int_{0}^{\infty}\|\chi(-u(\cdot),\xi)\|_{BV}d\xi\\
 & =\int_{0}^{\infty}\|\chi(u^{+}(\cdot),\xi)\|_{BV}d\xi+\int_{0}^{\infty}\|\chi(u^{-}(\cdot),\xi)\|_{BV}d\xi,
\end{align*}
and, hence, the claim.
%In conclusion, we have shown
%\begin{align*}
%\|u\|_{BV} & =\int_{\R}\|\chi(u(\cdot),\xi)\|_{BV}d\xi.
%\end{align*}
%
\end{proof}

\subsection*{Acknowledgements}

Gess was supported by the research project Random dynamical systems and regularization by noise for stochastic partial differential equations funded by the German Research Foundation. Part of the work was completed while Gess and Perthame were visiting the University of Chicago. Souganidis is supported by the NSF grant DMS-1266383. 

\bibliographystyle{acm}
\bibliography{refs}

\end{document}